\newcommand{\R}{\mathbb{R}}
\newcommand{\D}{\mathbb{D}}
\newcommand{\Z}{\mathbb{Z}}
\newcommand{\E}{\mathbb{E}}
\newcommand{\Prob}{\mathbb{P}}
\providecommand{\abs}[1]{\left\lvert#1\right\rvert}
\newtheorem{lemma}{Lemma}
\newtheorem{theorem}{Theorem}
\newtheorem{corollary}{Corollary}
\theoremstyle{definition}
\numberwithin{equation}{section}
\newtheorem{remark}{Remark}
\begin{document}
\begin{frontmatter}

\title{Stein's method for steady-state diffusion approximations
  of $M/Ph/n+M$ systems\protect\thanksref{T1}}
\runtitle{Stein's method for diffusion approximations}
\thankstext{T1}{This research is supported in part by NSF Grants CMMI-1030589, CNS-1248117 and CMMI-1335724.}

\begin{aug}
  \author{\fnms{Anton}  \snm{Braverman}\corref{}\ead[label=e1]{ab2329@cornell.edu}}
  \and
  \author{\fnms{J. G.} \snm{Dai}\ead[label=e2]{jd694@cornell.edu}}

  \runauthor{A. Braverman and J. G. Dai}

  \affiliation{Cornell University}

  \address{School of Operations Research\\and Information Engineering\\Cornell University\\Ithaca, NY 14850\\USA\\ 
          \printead{e1,e2}}

\end{aug}

\begin{abstract}
  We consider $M/Ph/n+M$ queueing systems in steady state. We
  prove that the Wasserstein distance between the stationary
  distribution of the normalized system size process and that of a
  piecewise Ornstein-Uhlenbeck (OU) process is bounded by
  $C/\sqrt{\lambda}$, where the constant $C$ is independent of the
  arrival rate $\lambda$ and the number of servers $n$ as long as they
  are in the Halfin-Whitt parameter regime. For each integer $m>0$, we
  also establish a similar bound for the difference of the $m$th
  steady-state moments. For the proofs, we develop
  a modular framework that is based on Stein's method. The framework
  has three components: Poisson equation, generator coupling, and
  state space collapse. The framework, with further refinement, is
  likely applicable to steady-state diffusion approximations for other
  stochastic systems.
\end{abstract}

\begin{keyword}[class=MSC]
\kwd[Primary ]{60K25}
\kwd[; secondary ]{90B20}
\kwd{60F99}
\kwd{60J60.}
\end{keyword}

\begin{keyword}
\kwd{Stein's method}
\kwd{diffusion approximation}
\kwd{steady-state}
\kwd{many servers}
\kwd{state space collapse}
\kwd{convergence rate.}
\end{keyword}

\end{frontmatter}

\section{Introduction}

This paper focuses on $M/Ph/n+M$ systems, which serve as building
blocks to model large-scale service systems such as customer contact
centers \cite{GansKoolMand2003,AksiArmoMehr2007} and hospital
operations \cite{Armoetal2011,Shietal2014}.  In such a system, there
are $n$ identical servers, the arrival process is Poisson (the symbol
$M$) with rate $\lambda$, the service times are i.i.d.\ having a
phase-type distribution (the symbol $Ph$) with mean $1/\mu$, the
patience times of customers are i.i.d.\ having an exponential
distribution (the symbol $+M$) with mean $1/\alpha<\infty$. When the
waiting time of a customer in queue exceeds her patience time, the
customer abandons the system without service; once the service of a
customer is started, the customer does not abandon.

Let $X_i(t)$ be the number of customers in phase $i$ at time $t$ for
$i=1, \ldots, d$, where $d$ is the number of phases in the service
time distribution. Let $X(t)$ be the corresponding vector. Then the
system size process $X=\{X(t), t\ge 0\}$ has a
unique stationary distribution for any arrival rate $\lambda$ and any
server number $n$ due to customer abandonment; although $X$ is not a Markov chain, it is a function of a Markov chain with a unique stationary distribution,  see Section~\ref{sec:CTMCrep} for details.  In this paper, we
prove, in Theorem \ref{thm:main}, that
\begin{equation} \label{eq:introresult}
\sup \limits_{h \in \mathcal{H}}  \abs{\E \big[h(\tilde X^{(\lambda)}(\infty))\big] - \E \big[h(Y(\infty))\big]} \leq \frac{C}{\sqrt{\lambda}} \quad \text{for any } \lambda > 0 \text{ and } n \ge 1 
\end{equation}
satisfying
\begin{equation}
  \label{eq:square-root}
  n \mu = \lambda + \beta \sqrt{\lambda},
\end{equation}
where $\beta\in \R$ is some constant and $\mathcal{H}$ is some class
of functions $h:\R^d\to \R$.  In (\ref{eq:introresult}), $\tilde
X^{(\lambda)}(\infty)$ is a random vector having the stationary distribution of
a properly scaled version of $X = X^{(\lambda)}$ that depends on the arrival rate $\lambda$, number of servers $n$, the service time distribution, and the abandonment rate $\alpha$, and $Y(\infty)$ is a random vector
having the stationary distribution of a piecewise Ornstein-Uhlenbeck
(OU) process $Y=\{Y(t), t\ge 0\}$. The stationary distribution of $X^{(\lambda)}$ exists even when $\beta$ is negative because $\alpha$ is assumed to be positive. The constant $C$ depends on the
service time distribution, abandonment rate $\alpha$, the  constant $\beta$ in (\ref{eq:square-root}), and the choice of ${\cal
  H}$, but $C$ is independent of the arrival rate $\lambda$ and the number of servers $n$. Two different classes $\mathcal{H}$ will used in our Theorem
\ref{thm:main}. First, we take $\cal{H}$ to be the class of polynomials up to a certain
order. In this case, (\ref{eq:introresult}) provides rates of
convergence for steady-state moments. Second, ${\cal H}$ is taken to be ${\cal W}^{(d)}$, the class of all $1$-Lipschitz functions
\begin{equation}
  \label{eq:Wd}
     {\cal W}^{(d)} = \{h: \R^d\to\R: \abs{h(x)-h(y)}\le \abs{x-y}\}.
\end{equation}
In this case, (\ref{eq:introresult}) provides rates of convergence for
stationary distributions under the Wasserstein metric \cite{Ross2011};
convergence under Wasserstein metric implies the convergence in
distribution \cite{GibbSu2002}.

In \cite{DaiHe2013}, an algorithm was developed to compute the
stationary distribution of the diffusion process $Y$.  The
distribution of $Y(\infty)$ is then used to approximate the stationary
distribution of $X^{(\lambda)}$. The approximation is remarkably accurate; see,
for example, Figure 1 there. It was demonstrated that computational efficiency, in terms of both time and memory, can be
achieved by diffusion approximations. For example, in an $M/H_2/500+M$
system studied in \cite{DaiHe2013}, where the system has $500$ servers
and a hyper-exponential service time distribution, it took around 1
hour and peak memory usage of 5 GB to compute the stationary distribution of
$X^{(\lambda)}$ using an algorithm that fully explores the special structure of a
three-dimensional Markov chain.  On the same computer, to compute the
stationary distribution of the corresponding two-dimensional
diffusion process it took less
than 1 minute and peak memory usage was less than 200 MB.  The computational saving by the diffusion model is
achieved partly through \emph{state space collapse} (SSC), a
phenomenon that causes dimension reduction in state space.
Theorem \ref{thm:main} quantifies the steady-state diffusion approximations developed in \cite{DaiHe2013}.

In  \cite{GurvHuanMand2014}, the authors prove a version of (\ref{eq:introresult}) for the $M/M/n+M$ system,
a special case of the $M/Ph/n+M$ system where the service time
distribution is exponential. They do not impose assumption
\eqref{eq:square-root} on the relationship between the arrival rate
$\lambda$ and number of servers $n$, resulting in a \textit{universal}
approximation that is accurate in any parameter regime, from
underloaded, to critically loaded, and to overloaded.  
To our knowledge, this is the first paper to study
convergence rates of steady state diffusion approximations.
Their method
relies on analyzing excursions of a one-dimensional Markov chain and 
the
corresponding diffusion process. It is unclear how to generalize 
their method to the multi-dimensional setting.


To prove Theorem \ref{thm:main}, we develop a framework that is based
on Stein's method \cite{Stei1972,Stei1986}.  The framework is modular
and relies on three components: a Poisson equation, generator
coupling, and SSC. The framework itself is an important part of our
contribution, in addition to Theorem \ref{thm:main}. We expect the
framework will be refined and used to prove rates of convergence of
steady-state diffusion approximations for many other stochastic
systems. This framework is closely related to a recent paper
\cite{Gurv2014} by Gurvich. We will discuss his work after giving an
overview of the framework.


We consider two sequences of
stochastic processes $\{X^{(\ell)}\}_{\ell =1}^{\infty}$ and
$\{Y^{(\ell)}\}_{\ell=1}^{\infty}$ indexed by $\ell$, where $X^{(\ell)} = \{X^{(\ell)}(t)\in
\R^d, t \geq 0\}$ is a continuous-time Markov chain (CTMC) and $Y^{(\ell)} = \{Y^{(\ell)}(t) \in \R^d, t \geq 0\}$
is a diffusion process.
Suppose $X^{(\ell)}(\infty)$ and
$Y^{(\ell)}(\infty)$ are two random vectors having the stationary
distributions of $X^{(\ell)}$ and $Y^{(\ell)}$, respectively. 
Let $G_{X^{(\ell)}}$ and $G_{Y^{(\ell)}}$ be the generators of $X^{(\ell)}$ and $Y^{(\ell)}$,
respectively;  for a diffusion process, $G_{Y^{(\ell)}}$ is the second order
elliptic operator as in \eqref{eq:DMgen}.
For a function $h: \R^d \to \R$ in a "nice" (but
large enough) class, we wish to bound
\begin{displaymath}
\abs{\E h(X^{(\ell)}(\infty)) - \E h(Y^{(\ell)}(\infty))}.
\end{displaymath}

\textbf{Component 1.} The first step is to set up the Poisson equation 
\begin{equation} \label{eq:intropoisson}
G_{Y^{(\ell)}} f_h(x) = h(x) - \E h(Y^{(\ell)}(\infty))
\end{equation}
and obtain various estimates of a solution $f_h$ to the Poisson
equation. Once we have $f_h$, one can take the expectation of both sides above to see that
\begin{equation}
  \label{eq:PoissonRep}
\E h(X^{(\ell)}(\infty)) - \E h(Y^{(\ell)}(\infty)) = \E G_{Y^{(\ell)}} f_h(X^{(\ell)}(\infty)).  
\end{equation}
 The Poisson equation \eqref{eq:intropoisson} is a partial
differential equation (PDE).  Even when $Y^{(\ell)}(\infty) = Y(\infty)$
(i.e.\ independent of $\ell$), one of the biggest challenges is
obtaining bounds on the partial derivatives of $f_h(x)$ (usually up
  to third order). We refer to these as gradient bounds. In the
one-dimensional case, (\ref{eq:intropoisson}) is an ordinary
differential equation (ODE) that usually has a closed form
expression  that one can analyze directly, see for instance  \cite[Lemma 13.1]{ChenGoldShao2011}. However, when $d>1$ obtaining these gradient bounds
becomes significantly harder. By exploiting probabilistic solutions
to the Poisson equation,
gradient bounds were established for cases when $Y(\infty)$ is a
multivariate normal \cite{Barb1990}, multivariate Poisson
\cite{Barb1988} and multivariate Gamma \cite{Luk1994}.

\textbf{Component 2.} The next step is to produce the generator coupling. For that, we use
the basic adjoint relationship (BAR) for the stationary distribution
of $X^{(\ell)}(\infty)$. One can check that a random vector $X^{(\ell)}(\infty) \in \R^d$ has the
stationary distribution of the CTMC $X^{(\ell)}$ if and only if
\begin{equation}
  \label{eq:bar}
\E G_{X^{(\ell)}} f (X^{(\ell)}(\infty)) = 0
\end{equation}
for all functions $f:\R^d\to \R$ that have compact support.  For a given $h$, the
corresponding Poisson equation solution $f_h$ does not have compact
support. An important part of this step is to prove that
\eqref{eq:bar} continues to hold for $f_h$.  Thus, it follows from \eqref{eq:PoissonRep} and \eqref{eq:bar} that
\begin{equation} \label{eq:introgencoup}
\E h(X^{(\ell)}(\infty)) - \E h(Y^{(\ell)}(\infty)) = \E [G_{Y^{(\ell)}} f_h(X^{(\ell)}(\infty)) - G_{X^{(\ell)}} f_h (X^{(\ell)}(\infty))].
\end{equation}
Note that two random variables in the left side of
\eqref{eq:introgencoup} are typically defined on two different probability
spaces, whereas two random variables in the right side of
\eqref{eq:introgencoup} are all defined in terms of $X^{(\ell)}(\infty)$,
thus producing a coupling on a common probability space.


To bound the right side of  (\ref{eq:introgencoup}), we study
\begin{equation}
  \label{eq:generatorDiff}
  G_{X^{(\ell)}}f_h(x) -G_{Y^{(\ell)}}f_h(x)
\end{equation}
for each $x$ in the state space of $X^{(\ell)}$. By performing Taylor
expansion on $G_{X^{(\ell)}}f_h(x)$, we find that the difference 
involves the product of partial derivatives of $f_h$ and a term bounded by 
a polynomial of $x$.
Therefore, in addition to gradient bounds on $f_h$, in a lot of
cases we need bounds on various moments of
$\abs{X^{(\ell)}(\infty)}$  which we refer to as moment bounds. The main challenge is that both gradient and moment bounds must be
\textit{uniform} in $\ell$.

\textbf{Component 3.} In the last step, SSC comes into play when $X^{(\ell)}$ itself is not a
CTMC, but a projection of some higher dimensional CTMC $U^{(\ell)} = \{U^{(\ell)}(t)
\in \mathcal{U}, t \geq 0\}$, where the dimension of the state space
$\mathcal{U}$ is strictly greater than $d$. 
This is the case, for
example, in the $M/Ph/n+M$ system.
It is this difference in
dimensions that is responsible for most of the computational speedup in diffusion
approximations; most complex stochastic processing systems
exhibit some form of SSC
\cite{Reim1984b,BellWill2005,FoscSalz1978,Harr1998,HarrLope1999,
  Whit1971,Will1998a,Bram1998a,DaiTezc2011,EryiSrik2012}.
 Let $G_U$ be the generator of $U^{(\ell)}$ and
$U^{(\ell)}(\infty)$ have its stationary distribution. Now, BAR \eqref{eq:bar} becomes 
$G_{U^{(\ell)}} F(U^{(\ell)}(\infty))=0$ for each `nice' $F:\mathcal{U}\to \R$. Furthermore,
(\ref{eq:introgencoup}) becomes
\begin{equation} \label{eq:introgencoupSSC}
\E h(X^{(\ell)}(\infty)) - \E h(Y^{(\ell)}(\infty)) = \E [G_{Y^{(\ell)}} f_h(X^{(\ell)}(\infty)) - G_{U^{(\ell)}} F_h(U^{(\ell)}(\infty))],
\end{equation}
where $F_h: \mathcal{U}\to \R$ is the lifting of $f_h:\R^d\to \R$ defined by letting $x \in \R^d$ be the projection of $u \in \mathcal{U}$ and then setting 
\begin{equation}
  \label{eq:lifting}
  F_h(u) = f_h(x).
\end{equation}
As before, we can perform Taylor expansion on $G_{U^{(\ell)}}F_h(u)$ to simplify
the difference $G_{U^{(\ell)}}F_h(u)-G_{Y^{(\ell)}}f_h(x)$.  To use this difference to bound
the right side of (\ref{eq:introgencoupSSC}), we need a steady-state
SSC result for $U^{(\ell)}(\infty)$, which tells us how to approximate
$U^{(\ell)}(\infty)$ from $X^{(\ell)}(\infty)$ and guarantees that this
approximation error is small. To obtain our SSC result, we need to rely heavily on the
structure of the $M/Ph/n+M$ system. 

In \cite{Gurv2014}, Gurvich develops methodologies to 
prove statements similar to (\ref{eq:introresult}) for various
queueing systems. In particular, Gurvich develops important elements
of the first two components of our framework in the special case when
dim$(\mathcal{U}) = d$. Along the way, he independently rediscovers
many of the ideas central to Stein's method in the setting of
steady-state diffusion approximations.  He relies on the existence of
uniform Lyapunov functions for the diffusion processes. Putting the
Lyapunov functions together with the probabilistic solution for
(\ref{eq:intropoisson}) and a-priori Schauder estimates for elliptic
PDEs (see \cite{GilbTrud1983}), he is able to obtain uniform gradient
bounds for a large class of Poisson equations. Furthermore, he also
obtains the necessary uniform moment bounds using these Lyapunov
functions by showing that uniform moment bounds for the diffusion
process imply the same moments are uniformly bounded for the
CTMC. However, his result on uniform moment bounds no longer holds
when dim$(\mathcal{U}) > d$ due to the need for SSC, which poses an
additional technical challenge. We overcome this challenge for the
$M/Ph/n+M$ system in Lemma~\ref{lemma:CTMCmoments}, in which moment
bounds are established \emph{recursively}.

The work in \cite{Gurv2014} is conceptually close to this paper. In that paper, Gurvich packages all the components required to prove his results into several conditions, with the main condition being the existence of uniform Lyapunov functions for the diffusion processes. In contrast, a key contribution of our framework is its \textit{modular} nature. The immediate benefit we gain is the ability to apply this framework to cases when SSC occurs (dim$(\mathcal{U}) > d$). Moreover, although we also rely on Lyapunov functions to establish both moment and gradient bounds in our particular setting, our framework clearly illustrates that Lyapunov functions are merely tools one can use to establish these moment and gradient bounds; the bounds themselves are the actual drivers of our main results.

We have already mentioned that Lemma 13.1 of \cite{ChenGoldShao2011} presents a systematic way to establish gradient bounds in the one-dimensional setting ($d = 1$), and \cite{Barb1988, Barb1990, Luk1994} establish gradient bounds in the multi-dimensional setting ($d > 1$) for a few special cases of $Y^{(\ell)}(\infty)$. However, establishing multi-dimensional gradient bounds remains a very difficult problem that usually requires using structural properties of the distribution of $Y^{(\ell)}(\infty)$. Gurvich's use of a-priori Schauder estimates \cite{GilbTrud1983} together with Lyapunov functions represents the first systematic approach to establishing multi-dimensional gradient bounds. 

With regards to using Lyapunov functions to establish moment bounds, certain systems may not require moment bounds at all. For example, approximating the stationary distribution of the  simple birth-death process corresponding to a single-server queue does not require the use of moment bounds (although we do not consider the $M/M/1$ queue in this paper, Stein's method is easily applicable to it).  Thus, the modularity of our framework presents the components one needs to justify approximations for various systems, and promotes the view that Lyapunov functions are merely one of many tools to tackle the difficulties in these components.   

It is useful to compare the challenge level of each component in our framework. The generator coupling is the least challenging component, because the class of functions for which \eqref{eq:bar} holds is usually rich enough. The remaining major difficulties are moment bounds, gradient bounds and SSC. Moment bounds and SSC are a property of the CTMC sequence $\{X^{(\ell)}\}_{\ell=1}^{\infty}$, and the difficulty in establishing them will depend heavily on the CTMCs. On the other hand, gradient bounds are tied to the diffusion processes $\{Y^{(\ell)}\}_{\ell=1}^{\infty}$, and are typically only difficult to establish when the diffusion processes are multi-dimensional. One important class of multi-dimensional diffusion processes for which we do not have gradient bounds are semi-martingale reflected Brownian motions (SRBMs) \cite{HarrReim1981}. An SRBM can approximate networks of single-server queues, such as generalized Jackson networks. The Schauder gradient bounds of \cite{Gurv2014} are not immediately applicable to SRBMs, because the corresponding Poisson equation is defined on the non-negative orthant, and has oblique reflection boundary conditions.  



Stein's method is a powerful method that has been widely used
in probability, statistics, and their wide range of applications such
as bioinformatics; see, for example, the survey papers \cite{Ross2011,
  Chat2014}, the recent book \cite{ChenGoldShao2011} and the
references within. The connection between Stein's method and diffusion
processes was first made by Barbour in \cite{Barb1988, Barb1990}. In
the context of Stein's method, generator coupling is a realization of an
abstract concept that first appeared in the famous commutative diagram
in (28) of \cite{Stei1986}; a more refined explanation of which is
provided in (4) of \cite{Chat2014}. In particular, using Chatterjee's notation in
\cite{Chat2014}, our $\E G_{X^{(\ell)}} f_h (X^{(\ell)}(\infty))$ in \eqref{eq:introgencoup} is his $\E T \alpha
f(W)$. 

Diffusion approximations are usually ``justified'' by heavy traffic
limit theorems. 
It is proved in \cite{DaiHeTezc2010} that for our $M/Ph/n+M$ systems,
\begin{equation}
  \label{eq:processconvere}
  \tilde X^{(\lambda)}=\{\tilde X^{(\lambda)}(t), t\ge 0\} \Longrightarrow  Y=\{Y(t), t\ge
0\}
\end{equation}
as $\lambda$ goes to infinity while satisfying (\ref{eq:square-root}) (we use the arrival rate $\lambda$ to index these systems instead of the abstract $\ell$ as before).
Proving these limit theorems has been an active area of research in
the last 50 years; see, for example,
\cite{Boro1964,Boro1965,IgleWhit1970,IgleWhit1970a, Harr1978,Reim1984}
for single-class queueing networks, \cite{Pete1991, Bram1998a,
  Will1998a} for multiclass queueing networks,
\cite{KangKellLeeWill2009, YaoYe2012} for bandwidth sharing networks,
\cite{HalfWhit1981,Reed2009,DaiHeTezc2010} for many-server queues. The
convergence used in these limit theorems is the convergence in
distribution on the path space $\D([0, \infty), \R^d)$, endowed with
Skorohod $J_1$-topology \cite{EthiKurt1986,Whit2002}. The
$J_1$-topology on $\D([0,\infty), \R^d)$ essentially means convergence
in $\D([0, T], \R^d)$ for each $T>0$. In particular, it says nothing
about the convergence at ``$\infty$''. Therefore, these limit theorems
do not justify the steady-state convergence.


 In
\cite{DaiDiekGao2014}, the authors prove the convergence of
distribution $\tilde X^{(\lambda)}(\infty)$ to that of $Y(\infty)$ by proving an
interchange of limits.  The proof technique follows that of the
seminal paper \cite{GamaZeev2006}, where the authors prove an
interchange of limits for generalized Jackson networks of
single-server queues.  The results in \cite{GamaZeev2006} were
improved and extended by various authors for networks of
single-servers \cite{BudhLee2009, ZhanZwar2008, Kats2010}, for
bandwidth sharing networks~\cite{YaoYe2012}, and for many-server
systems \cite{Tezc2008, GamaStol2012, Gurv2014a}.  These ``interchange
limits theorems'' are qualitative and thus do not provide rates of
convergence as in (\ref{eq:introresult}).

\subsection{Notation}
\label{sec:notation}
All random variables and stochastic processes are defined on a common
probability space $(\Omega, \mathcal{F}, \mathbb{P})$ unless otherwise
specified. For a stochastic process $X = \{X(t), t \geq 0\}$ that has a unique stationary distribution we let
$X(\infty)$ be the random element having the stationary distribution
of $X$. For a sequence of random variables $\{X^n\}_{n=1}^{\infty}$, we write $X^n \Rightarrow X$ to denote convergence in distribution (also known as weak convergence) of $X^n$ to some random variable $X$. If $a > b$, we
adopt the convention that $\sum \limits_{i=a}^b (\cdot) = 0$. For an
integer $d \geq 1$, $\R^d$ denotes the $d$-dimensional Euclidean space
and $\Z_+^d$ denotes the space of $d$-dimensional vectors whose
elements are non-negative integers. For $a,b \in \R$, we define $a
\vee b = \max\{a,b\}$ and $a \wedge b = \min\{a,b\}$. For $x \in \R$,
we define $x^+ = x \vee 0$ and $x^- = (-x)\vee 0$.  For $x \in \R^d$,
we use $x_i$ to denote its $i$th entry and $\abs{x}$ to denote its
Euclidean norm. For $x,y \in \R^d$, we write $x \leq y$ when $x_i \leq
y_i$ for all $i$ and when $x \leq y$ we define the vector interval
$[x,y] = \{z: x \leq z \leq y\}$. All vectors are assumed to be
column vectors. We let $x^T$ and $A^T$ denote the transpose of a
vector $x$ and matrix $A$, respectively. For a matrix $A$, we use
$A_{ij}$ to denote the entry in the $i$th row and $j$th column. We reserve $I$
for the identity matrix, $e$ for the vector of all ones and $e^{(i)}$
for the vector that has a one in the $i$th element and zeroes
elsewhere; the dimensions of these vectors will  be clear from the context.
%

\subsection{Outline for Rest of Paper}
The rest of the paper is structured as
follows. Section~\ref{sec:model} formally defines the $M/Ph/n+M$
system as well as the diffusion process whose steady-state
distribution will approximate the system. Section~\ref{sec:mainresult}
states our main results. Section~\ref{sec:CTMCrep} describes the CTMC
representation of the $M/Ph/n+M$ system. Section~\ref{sec:mainproof}
introduces the first two components of our framework; the Poisson
equation and generator coupling. Section~\ref{sec:state-space-collapse} describes the SSC result, illustrating the third
component of our framework. It is here that the reader may see
the reason behind our slower rate of
convergence. This framework is then used in
Section~\ref{sec:thmpf} to prove our main
results. Appendix~\ref{app:proofs} contains the proofs for most of
the lemmas. 

\section{Models} \label{sec:model} 

In this section, we give additional description of the 
$M/Ph/n+M$ system and the corresponding diffusion model.

\subsection{The $M/Ph/n+M$ System}
The basic description of the $M/Ph/n+M$ queueing system was given
  in the first paragraph of the introduction. Here, we describe the
  dynamics of the system.  Upon arrival to the system with idle
servers, a customer begins service immediately. Otherwise, if all
servers are busy, the customer enters an infinite capacity queue to
wait for service. When a server completes serving a customer,
the server becomes idle if the queue is empty, or takes a customer
from the queue under the first-in-first-out (FIFO) service policy if
it is nonempty. Recall that the $Ph$ indicates that customer service times are
i.i.d.\ following a phase-type distribution. We shall provide a
definition of a phase-type distribution shortly below. The phase-type distribution
can approximate any positive-valued distribution \cite[Theorem
III.4.2]{Asmu2003}.

Recall that $\lambda$ denotes the arrival rate of the system.  We use
$1/\alpha$ to denote the mean patience time.  In our study, we take
the service time distribution and $\alpha$ fixed, but allow the
arrival rate $\lambda$ and  the number of servers
$n$ to grow without bound.
Throughout this paper,
 we assume that $n$ follows
the square-root-safety staffing rule in \eqref{eq:square-root}.
In
the pioneering paper of \cite{HalfWhit1981}, the authors studied these
systems as $\lambda\to\infty$ and $n$ grows to infinity following
\eqref{eq:square-root}. This parameter regime is now known as the
Halfin-Whitt regime. In this regime, the system has high server
utilization and at the same time has small customer waiting time and
abandonment fraction.  Therefore, this regime is also known as the
quality- and efficiency-driven (QED) regime, a term coined by
\cite{GansKoolMand2003}.

\subsubsection*{Phase-type Service Time  Distribution}\label{sec:phase-type-distr}
A phase-type distribution is assumed to have $d \geq 1$ phases. Each phase-type distribution is determined by the tuple $(p,\nu,P)$, where $p \in \R^d$ is a vector of non-negative entries whose sum is equal to one, $\nu \in \R^d$ is a vector of positive entries and $P$ is a $d \times d$ sub-stochastic matrix. We assume that $P$ is transient, i.e. 
\begin{equation}
(I-P)^{-1} \text{ \quad exists,} \label{eq:transience}
\end{equation}
and without loss of generality, we also assume that the diagonal entries of $P$ are zero ($P_{ii}=0$).

A random variable is said to have a phase-type distribution with parameters $(p,\nu,P)$ if it is equal to the absorption time of the following CTMC. The state space of the CTMC is $\{1, ... ,d+1\}$, with $d+1$ being the absorbing state. The CTMC starts off in one of the states in $\{1,...,d\}$ according to distribution $p$. For $i = 1, ... ,d $, the time spent in state $i$ is exponentially distributed with mean $1/\nu_i$. Upon leaving state $i$, the CTMC transitions to state $j= 1, ... ,d $ with probability $P_{ij}$, or gets absorbed into state $d+1$ with probability $1- \sum_{j=1}^d P_{ij}$.


The CTMC above is a useful way to describe the service times in the
$M/Ph/n+M$ system. Upon arrival to the system, a customer is assigned
her first service phase according to distribution $p$. If the customer is forced to wait in queue because all servers are busy, she is still assigned a first service phase, but this phase of service will not start until a server takes on this customer for service. Once a customer with initial phase $i$ enters service, her service time is the time until absorption to state $d+1$ by the CTMC. We
assume without loss of generality that for each service phase $i$,
either
\begin{equation} \label{eq:noredundancy}
p_i > 0 \text{ or } P_{ji}> 0 \text{ for some $j$}.
\end{equation}
This simply means that there are no redundant phases. 

We now define some useful quantities for future use. Define 
\begin{equation}
R = (I-P^T)\text{diag}(\nu) \quad  \text{and} \quad \gamma = \mu R^{-1}p, \label{eq:defph}
\end{equation}
where the matrix $\text{diag}(\nu)$ is the $d\times d$ diagonal matrix 
with diagonal entries given by the components of $\nu$. One may verify that $\sum \limits_{i=1}^d \gamma_i = 1$. One can interpret $\gamma_i$ to be the fraction of phase $i$ service load on the $n$ servers.

For concreteness, we provide two examples of phase-type distributions when $d=2$. The first example is the two-phase hyper-exponential distribution, denoted by $H_2$. The corresponding tuple of parameters is $(p,\nu,P)$, where
\begin{displaymath}
p = (p_1, p_2)^T, \quad \nu = (\nu_1,\nu_2)^T, \text{ \quad and \quad } P = 0.
\end{displaymath}
Therefore, with probability $p_i$, the service time follows an exponential distribution with mean $1/\nu_i$.

The second example is the Erlang-$2$ distribution, denoted by $E_2$. The corresponding tuple of parameters is $(p,\nu,P)$, where
\begin{displaymath}
p = (1,0)^T, \quad \nu = (\theta, \theta)^T, \text{ \quad and \quad } P = \begin{pmatrix}
0 & 1\\ 
0 & 0
\end{pmatrix}.
\end{displaymath}
An $E_2$ random variable is a sum of two i.i.d.\ exponential random variables, each having mean $1/\theta$.

\subsection{System Size Process and Diffusion Model}
\label{sec:diffusion}

Before we state the main results, we introduce the process we wish to approximate, as well as the approximating diffusion process -- the piecewise OU process.
Recall that  $X = \{X(t) \in \R^d, t \geq 0\}$ is the system size process, where 
\begin{displaymath}
X(t) = (X_1(t), ... , X_d(t))^T,
\end{displaymath}
and $X_i(t)$ is the number of customers of phase $i$ in the system (queue + service) at time $t$. We emphasize that $X$ is not a CTMC, but it is a deterministic function of a higher-dimensional CTMC, which will be described in Section \ref{sec:CTMCrep}. 

The process $X$ depends on $\lambda, n, \alpha, p, P$, and $\nu$. However, in this paper we keep $\alpha, p, P$, and $\nu$ fixed, and allow $\lambda$ and $n$ to vary according to \eqref{eq:square-root}. For the remainder of the paper we write $X^{(\lambda)}$ to emphasize the dependence of $X$ on $\lambda$; the dependence of $X^{(\lambda)}$ on $n$ is implicit through \eqref{eq:square-root}.

Recall the definition of $\gamma$ in (\ref{eq:defph}) and define the
scaled random variable
\begin{equation}
  \label{eq:scaledctmcsd}
\tilde X^{(\lambda)}(\infty) = \delta (X^{(\lambda)}(\infty) - \gamma n),
\end{equation}
where, for convenience, we let 
\begin{equation}
  \label{eq:delta}
\delta = 1/\sqrt{\lambda}.  
\end{equation}
To approximate $\tilde X^{(\lambda)}(\infty)$, we introduce the piecewise OU process $Y =
\{Y(t), t\geq 0\}$. This is a $d$-dimensional diffusion process satisfying
\begin{equation}
Y(t) = Y(0) - p \beta t - R \int_0^t{\big(Y(s) -p(e^{T}Y(s))^+\big) ds} - \alpha p \int_0^t {(e^{T}Y(s))^+ds} + \sqrt{\Sigma} B(t). \label{eq:defou}
\end{equation}
Above, $B(t)$ is the $d$-dimensional standard Brownian motion and $\sqrt{\Sigma}$ is any $d\times d$ matrix satisfying
\begin{equation} \label{eq:OUdiffusioncoeff}
\sqrt{\Sigma}\sqrt{\Sigma}^T= \Sigma = \text{diag}(p) + \sum_{k=1}^d \gamma_k \nu_k H^k + (I-P^T)\text{diag}(\nu) \text{diag}(\gamma)(I-P),
\end{equation}
where the matrix $H^k$ is defined as 
\begin{displaymath}
H^k_{ii} = P_{ki}(1-P_{ki}), \quad H^k_{ij} = -P_{ki}P_{kj} \quad \text{ for $j\neq i$}.
\end{displaymath} 
Comparing the form of $\Sigma$ above to (2.24) of \cite{DaiHe2013} confirms that it is positive definite. Thus $\sqrt{\Sigma}$ exists. Observe that $Y$ depends only on $\beta, \alpha, p, P$, and $\nu$, all of which are held constant throughout this paper.


The diffusion process in (\ref{eq:defou}) has been studied by \cite{DiekGao2013}. They prove that $Y$ is positive recurrent by finding an appropriate Lyapunov function. In particular, this means that $Y$ admits a stationary distribution.

\section{Main Results} \label{sec:mainresult}

We now state our main results.

\begin{theorem} \label{thm:main}
For every integer $m > 0$, there exists a constant $C_m = C_m(\beta,\alpha,p,\nu,P)>0$ such that for all locally Lipschitz functions $h : \R^d \to \R$ satisfying 
\begin{displaymath}
\abs{h(x)} \leq \abs{x}^{2m} \quad \text{  for  } x\in \R^d,
\end{displaymath}
we have
\begin{displaymath} \label{eq:thm1}
\abs{ \E h(\tilde X^{(\lambda)}(\infty)) - \E h(Y(\infty))} \leq \frac{C_m}{\sqrt{\lambda}}
\quad \text{for all } \lambda >0
\end{displaymath} 
satisfying \eqref{eq:square-root}, which we recall below as
\begin{displaymath}
 n \mu = \lambda + \beta \sqrt{\lambda}.
\end{displaymath}
\end{theorem}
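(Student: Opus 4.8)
The plan is to instantiate the three-component framework described in the introduction, taking $X^{(\ell)}=\tilde X^{(\lambda)}$, $Y^{(\ell)}=Y$ (which here does not depend on $\lambda$), and $U^{(\ell)}=U^{(\lambda)}$ the higher-dimensional CTMC of Section~\ref{sec:CTMCrep} of which $X^{(\lambda)}$ is a projection. Since every admissible $h$ can be treated separately, I fix $m$ and a locally Lipschitz $h$ with $\abs{h(x)}\le\abs{x}^{2m}$ and seek a bound whose constant depends only on $m,\beta,\alpha,p,\nu,P$.

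First I would set up the Poisson equation $G_Y f_h = h - \E h(Y(\infty))$ and take the probabilistic solution $f_h(x) = -\int_0^\infty\big(\E_x h(Y(t)) - \E h(Y(\infty))\big)\,dt$. Using the Lyapunov function for $Y$ from \cite{DiekGao2013} and the geometric ergodicity it yields, the integral converges, and $f_h$ together with its derivatives up to third order grows at most polynomially in $\abs{x}$, with degree and constant controlled by $m$; these are the \emph{gradient bounds}. Obtaining them is where one marries the Lyapunov bound with a-priori (Schauder-type) interior estimates for the elliptic operator $G_Y$, in the spirit of \cite{Gurv2014}; a technical wrinkle is that the drift of $Y$ is only Lipschitz, not smooth, across the hyperplane $\{e^{T}y=0\}$, so the estimates must be obtained on each side and patched. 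Because $Y$ is fixed, these bounds are automatically uniform in $\lambda$.

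Next I would produce the generator coupling. Lifting $f_h$ to $F_h$ on $\mathcal{U}$ via \eqref{eq:lifting}, and extending the BAR \eqref{eq:bar} — which a priori holds only for compactly supported test functions — to $F_h$ by a truncation argument combined with the polynomial growth of $F_h$ and the moment bounds below, gives the exact identity
\begin{displaymath}
\E h(\tilde X^{(\lambda)}(\infty)) - \E h(Y(\infty)) = \E\big[G_Y f_h(\tilde X^{(\lambda)}(\infty)) - G_{U^{(\lambda)}} F_h(U^{(\lambda)}(\infty))\big].
\end{displaymath}
Two further ingredients feed into this. The \emph{moment bounds}: $\E\abs{\tilde X^{(\lambda)}(\infty)}^k$ and all other needed moments of the scaled $U^{(\lambda)}(\infty)$ (including the scaled queue length) are bounded uniformly in $\lambda$ satisfying \eqref{eq:square-root}, which I would prove \emph{recursively} in $k$ (Lemma~\ref{lemma:CTMCmoments}) by applying BAR to suitable polynomial test functions so that the abandonment drift supplies the negative feedback that closes the induction. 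And a quantitative \emph{state space collapse}: in steady state the idle-server phase configuration encoded in $U^{(\lambda)}(\infty)$ is, up to an error of order $\delta=1/\sqrt{\lambda}$ in the relevant $L^p$ sense, a deterministic function of $X^{(\lambda)}(\infty)$; this step leans on the FIFO discipline and phase-type structure of the $M/Ph/n+M$ system.

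To finish, for each state $u$ with projection $x$ I would Taylor-expand $G_{U^{(\lambda)}}F_h(u)$ about $x$. Since the CTMC jumps have size $O(\delta)$ and rate $O(\lambda)=O(\delta^{-2})$, the drift and diffusion terms reproduce $G_Y f_h(x)$ up to: a third-order Taylor remainder of order $\lambda\,\delta^3 \sup\abs{D^3 f_h}=O(\delta)$; the mismatch between the true transition rates and the piecewise-linear drift and constant covariance $\Sigma$ of $Y$, which the SSC estimate controls at order $O(\delta)$; and the error from evaluating at $U^{(\lambda)}(\infty)$ rather than $\tilde X^{(\lambda)}(\infty)$, again $O(\delta)$ by SSC. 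Multiplying the polynomial gradient bounds by the uniform moment bounds and by these $O(\delta)$ factors, then taking expectations, yields the bound $C_m/\sqrt{\lambda}$. I expect the main obstacle to be precisely the pair of CTMC-side estimates: because $X^{(\lambda)}$ is not Markov one is forced to work with the larger chain $U^{(\lambda)}$, the recursive moment bounds must be engineered so the induction closes with $\lambda$-independent constants, and it is the SSC error that ultimately caps the rate at $1/\sqrt{\lambda}$; rigorously extending BAR to the non-compactly-supported $F_h$ is a smaller but still nontrivial point.
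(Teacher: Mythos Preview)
Your overall architecture matches the paper's exactly: Poisson equation with Schauder-type gradient bounds via \cite{Gurv2014} and the Lyapunov function of \cite{DiekGao2013}; BAR extended to $F_h$ by a growth-plus-moments argument; Taylor expansion of $G_{U^{(\lambda)}}Af_h$; recursive uniform moment bounds (Lemma~\ref{lemma:CTMCmoments}); and a quantitative SSC result exploiting the FIFO/phase-type structure. So the plan is correct in outline.

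There is, however, one genuine gap in your SSC step that would cost you the rate. You assert that the SSC error is ``of order $\delta$ in the relevant $L^p$ sense'' and then use this to control the drift mismatch term. In fact the $L^p$ size of the SSC error is only $O(\sqrt{\delta})$: conditional on the total queue length $(e^T\tilde X^{(\lambda)}(\infty))^+=\ell/\sqrt{\lambda}$, the phase composition $Q^{(\lambda)}(\infty)$ is multinomial$(\ell,p)$, so $\delta Q^{(\lambda)}(\infty)-p(e^T\tilde X^{(\lambda)}(\infty))^+$ has second moment of order $\delta^2\ell=\delta\cdot\delta\ell$, and since $\delta\ell=(e^T\tilde X^{(\lambda)}(\infty))^+$ is $O(1)$ in the Halfin--Whitt regime, the $L^2$ norm is $\sqrt{\delta}$, not $\delta$ (this is exactly \eqref{eq:sscscaled}). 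Plugging $\sqrt{\delta}$ into a Cauchy--Schwarz bound for $\E\big[\partial_i f_h(\tilde X)\,(\delta Q_i-p_i(e^T\tilde X)^+)\big]$ yields only $C/\lambda^{1/4}$.

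The paper recovers the full $1/\sqrt{\lambda}$ by an additional structural observation you did not mention (Lemma~\ref{lemma:SSC} and Lemma~\ref{lemma:SSC_bound}): conditional on $(e^T\tilde X^{(\lambda)}(\infty))^+$, the queue vector $Q^{(\lambda)}(\infty)$ is \emph{independent} of the in-service vector $Z^{(\lambda)}(\infty)$, and the SSC error has conditional mean zero. One then writes $\partial_i f_h(\tilde X)=\partial_i f_h\big(\delta(Z-\gamma n)+p(e^T\tilde X)^+\big)+\big[\partial_i f_h(\tilde X)-\partial_i f_h(\tilde X-\delta Q+p(e^T\tilde X)^+)\big]$; the first piece multiplied by the SSC error has expectation zero by the conditional independence and \eqref{eq:multinom}, while the second piece is itself a first-order Taylor increment in the SSC error, producing a \emph{square} of the SSC error times a second derivative, hence order $\delta$. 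Without this centering-and-independence trick your argument stalls at $\lambda^{-1/4}$.
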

Theorem~\ref{thm:main} will be proved in  Section~\ref{sec:thmpf}. As a consequence of the theorem, we immediately have the following corollary.
\begin{corollary} \label{corol:main}
There exists a constant $C_1 = C_1(\beta,\alpha,p,\nu,P)>0$ such that

\begin{displaymath}
  \sup \limits_{h \in \mathcal{W}^{(d)}}  \abs{\E h(\tilde X^{(\lambda)}(\infty)) - \E h(Y(\infty))} \leq \frac{C_1}{\sqrt{\lambda}} \quad \text{for all } \lambda >0 
\end{displaymath}
satisfying \eqref{eq:square-root}, where $W^{(d)}$ is defined in \eqref{eq:Wd}.
In particular, 
\begin{displaymath}
\tilde X^{(\lambda)}(\infty) \Rightarrow Y(\infty) \text{ \quad as  \quad } \lambda \rightarrow \infty. 
\end{displaymath}
\end{corollary}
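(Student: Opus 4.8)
The plan is to read the whole statement off Theorem~\ref{thm:main} with $m=1$; the substantive analytic work---the Poisson equation, the uniform gradient and moment bounds, and the state space collapse---is already contained there, and what remains is a short reduction to accommodate the fact that a general $1$-Lipschitz function grows only linearly and need not be dominated by $\abs{x}^{2}$ near the origin.

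First I would note that the supremum defining the Wasserstein distance is unchanged if we restrict to $h\in\mathcal{W}^{(d)}$ with $h(0)=0$, since subtracting the constant $h(0)$ shifts $\E h(\tilde X^{(\lambda)}(\infty))$ and $\E h(Y(\infty))$ equally and so leaves their difference untouched; for such $h$ we have $\abs{h(x)}\le\abs{x}\le 1+\abs{x}^{2}$ for all $x$. This is the hypothesis of Theorem~\ref{thm:main} with $m=1$ except for the additive constant near the origin, which is harmless because that hypothesis enters the proof only to control the tails of $\tilde X^{(\lambda)}(\infty)$ and $Y(\infty)$ through the moment bounds. To make this airtight without appealing to the internals of the proof, I would instead write $h=u+v$, where $u$ agrees with $h$ on $\{\abs{x}\ge 1\}$ and equals the sign-preserving truncation $\operatorname{sgn}(h(x))\min(\abs{h(x)},\abs{x}^{2})$ on $\{\abs{x}<1\}$, and $v=h-u$. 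Then $u$ is locally Lipschitz with $\abs{u(x)}\le\abs{x}^{2}$ everywhere, so Theorem~\ref{thm:main} with $m=1$ applies directly to $u$; and $v$ is locally Lipschitz, supported in the closed unit ball, and uniformly bounded, so the moment bounds it requires are trivial and the same $C/\sqrt{\lambda}$ estimate holds for $v$. Summing the two bounds and taking the supremum over $h$ yields $\sup_{h\in\mathcal{W}^{(d)}}\abs{\E h(\tilde X^{(\lambda)}(\infty))-\E h(Y(\infty))}\le C_1/\sqrt{\lambda}$ after relabelling the constant.

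The convergence in distribution is then immediate: letting $\lambda\to\infty$ along the square-root staffing relation makes the Wasserstein distance tend to $0$, and convergence in the Wasserstein metric implies weak convergence~\cite{GibbSu2002}. Equivalently, the decomposition above applied to an arbitrary bounded Lipschitz $h$ shows $\E h(\tilde X^{(\lambda)}(\infty))\to\E h(Y(\infty))$, and bounded Lipschitz functions are convergence-determining.

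I expect the only obstacle to be bookkeeping rather than anything deep: verifying that the truncated function $u$ is genuinely locally Lipschitz with $\abs{u(x)}\le\abs{x}^{2}$, and stating cleanly why the bounded remainder $v$ falls under Theorem~\ref{thm:main}. All of the genuine difficulty is inherited from that theorem.
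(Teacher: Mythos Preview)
Your core strategy is exactly the paper's: assume $h(0)=0$ without loss of generality, deduce $\abs{h(x)}\le\abs{x}$ from the Lipschitz condition, and invoke Theorem~\ref{thm:main} with $m=1$; the weak convergence then follows from Wasserstein convergence. The paper's proof consists of precisely those three sentences and nothing more.

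The extra decomposition $h=u+v$ you introduce, meant to reconcile $\abs{h(x)}\le\abs{x}$ with the literal hypothesis $\abs{h(x)}\le\abs{x}^{2}$ near the origin, does not actually close the gap you set out to close. The remainder $v$ is bounded and supported in the unit ball, but it still violates $\abs{v(x)}\le\abs{x}^{2m}$ for every $m$, so Theorem~\ref{thm:main} as stated does not apply to it; your phrase ``the moment bounds it requires are trivial and the same $C/\sqrt{\lambda}$ estimate holds'' is precisely the appeal to the internals of the proof that the decomposition was supposed to avoid. The paper simply does not engage with this wrinkle. If you want a clean black-box repair, note that the proof of Theorem~\ref{thm:main} (through Lemma~\ref{lemma:poisson}) uses the growth hypothesis only via $\abs{h(x)-\E h(Y(\infty))}\le C_m(1+V(x))^{m}$, so the condition may be read as $\abs{h(x)}\le 1+\abs{x}^{2m}$ with the same constant; the bound $\abs{h(x)}\le\abs{x}\le 1+\abs{x}^{2}$ then fits directly with $m=1$, and no splitting is needed.
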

\begin{proof}
Suppose $h \in \mathcal{W}^{(d)}$. Without loss of generality, we may assume that $h(0) = 0$, otherwise we may simply consider $h(x) - h(0)$. By definition of $\mathcal{W}^{(d)}$, 
\begin{displaymath}
\abs{h(x)} \leq \abs{x} \quad \text{ for } x\in \R^d
\end{displaymath}
and the result follows \text{from Theorem~\ref{eq:thm1} with $m=1$}.
\end{proof}

\begin{remark}\label{rem:lambdarange}
  For any fixed $\beta\in \R$, there are only finitely many
  combinations of $\lambda \in (0, 4)$ and integer $n\ge 1$ satisfying
  \eqref{eq:square-root}. Therefore, it suffices to prove Theorem
  \ref{thm:main} by restricting $\lambda\ge 4$, a convenience for technical purposes.
\end{remark}

\section{Markov Representation} \label{sec:CTMCrep}
The $M/Ph/n+M$ system can be represented as a CTMC
\begin{displaymath}
U^{(\lambda)} = \{ U^{(\lambda)}(t), t \geq 0\}
\end{displaymath} taking values in $\mathcal{U}$, the set of finite sequences $\{u_1, ... , u_k\}$ . The sequence $u=\{u_1, ... , u_k\}$ encodes the service phase of each customer and their order of arrival to the system. For example, the sequence $\{5, 1, 4\}$ corresponds to $3$ customers in the system, with the service phases of the first, second and third customers (in the order of their arrival to the system) being $5$, $1$ and $4$, respectively. We use $\abs{u}$ to denote the length of the sequence $u$. The irreducibility of the CTMC $U^{(\lambda)}$ is guaranteed by
(\ref{eq:transience}) and (\ref{eq:noredundancy}).

 We remark here that $U^{(\lambda)}$ is not the simplest Markovian representation of the $M/Ph/n+M$ system. Another way to represent this system would be to consider a $d+1$ dimensional CTMC that keeps track of the total number of customers in the system, as well as the total number of customers in each phase that are currently in service; this $d+1$ dimensional CTMC is used in \cite{DaiHeTezc2010}. In this paper we use the infinite dimensional CTMC $U^{(\lambda)}$ because the system size process $X^{(\lambda)}$ cannot be recovered sample path wise from the $d+1$ dimensional CTMC, it can only be recovered from $U^{(\lambda)}$. Also, the CTMC $U^{(\lambda)}$ will play an important role in our SSC argument in Section~\ref{sec:state-space-collapse}.

In addition to the system size process $X^{(\lambda)}$, we define the queue size process $Q^{(\lambda)} = \{Q^{(\lambda)}(t) \in \Z^d_+, t \geq 0\}$, where 
\begin{displaymath}
Q^{(\lambda)}(t) = (Q^{(\lambda)}_1(t), ... , Q^{(\lambda)}_d(t))^T,
\end{displaymath}
and $Q^{(\lambda)}_i(t)$ is the number of customers of phase $i$ in the queue at time $t$.
Then $X^{(\lambda)}_i(t) - Q^{(\lambda)}_i(t) \geq 0$ is the number phase $i$ customers in service at time $t$.

To recover $X^{(\lambda)}(t)$ and $Q^{(\lambda)}(t)$ from $U^{(\lambda)}(t)$, we define the projection functions $\Pi_X : \mathcal{U} \to \R^d$  and $\Pi_Q : \mathcal{U} \to \R^d$. For each $u\in {\cal U}$ and each phase $i\in \{1, \ldots, d\}$,
\begin{displaymath}
\left(\Pi_X(u)\right)_{i} = \sum_{k=1}^{\abs{u}} 1_{\{ u_k = i\}}
\quad \text{ and }
\quad 
\left(\Pi_Q(u)\right)_{i} = \sum_{k = n+1}^{\abs{u}} 1_{\{ u_k = i\}}.
\end{displaymath}
It is clear that on each sample path
\begin{equation} \label{eq:projectionCTMC}
X^{(\lambda)}(t) = \Pi_X(U^{(\lambda)}(t)) \text{\quad and \quad} Q^{(\lambda)}(t) = \Pi_Q(U^{(\lambda)}(t)) \quad \text{ for } t\ge 0.
\end{equation} 
Because there is customer abandonment the Markov chain $U^{(\lambda)}$ can be
proved to be positive recurrent with a unique stationary
distribution \cite{DaiDiekGao2014}. We use $U^{(\lambda)}(\infty)$ to denote the random element that has
the stationary distribution. It follows that $X^{(\lambda)}(\infty)=\Pi_X(U^{(\lambda)}(\infty))$ has the stationary distribution of $X^{(\lambda)}$, and  $\tilde X^{(\lambda)}(\infty)$ in \eqref{eq:scaledctmcsd} is given by
\begin{equation}
  \label{eq:Xinfty}
  \tilde X^{(\lambda)}(\infty) = \delta (\Pi_X(U^{(\lambda)}(\infty))-\gamma n).
\end{equation}

For $u \in \mathcal{U}$, we define 
\begin{equation}\label{eq:projlittle}
x = \delta(\Pi_X(u) - \gamma n), \quad  q = \Pi_Q(u) \text{ \quad and \quad} z = \Pi_X(u) - q.
\end{equation}
When the CTMC is in state $u$, we interpret $(\Pi_X(u))_i$, $q_i$, and $z_i$ as the number of the
  phase $i$ customers in system, in queue, and in service,
  respectively. It follows that $z\ge 0$. 
  
Let $G_{U^{(\lambda)}}$ be the generator of the CTMC $U^{(\lambda)}$. To describe it, we introduce the lifting operator $A$. For any function $f: \R^d \to \R$, we define $Af: \mathcal{U} \to \R$ by 
\begin{equation} \label{eq:lifter}
Af(u) = f(\delta (\Pi_X(u)-\gamma n)) = f(x).
\end{equation}
Hence, for any function $f: \R^d \to \R$, the generator acts on the lifted version $Af$ as follows:
\begin{eqnarray}
G_{U^{(\lambda)}} Af(u) &=& \sum \limits_{i=1}^d \lambda p_i( f(x + \delta e^{(i)}) - f(x)) + \sum \limits_{i=1}^d \alpha q_i (f(x - \delta e^{(i)}) - f(x)) \nonumber\\
&& + \sum \limits_{i=1}^d \nu_i z_i \Big{[} \sum \limits_{j=1}^d P_{ij}f(x+\delta e^{(j)}-\delta e^{(i)}) \nonumber\\
&& {} + (1-\sum \limits_{j=1}^d P_{ij})f(x-\delta e^{(i)}) -f(x)  \Big{]}.
\label{eq:tildeG}
\end{eqnarray}

Observe that $G_{U^{(\lambda)}} Af(u)$ does not depend on the entire sequence $u$;
it depends on $x$, $q$, and the function $f$ only.

\section{The Generator Coupling of Stein's  Method} \label{sec:mainproof} This section is devoted to developing
a generator coupling of Stein's method.  This framework will be used
in Section \ref{sec:thmpf} to prove Theorem~\ref{thm:main}.
\subsection{Poisson Equation}
The main idea behind Stein's method is that instead of bounding 
\begin{equation}
  \label{eq:diff}
\E h(\tilde X^{(\lambda)}(\infty)) - \E h(Y(\infty)),  
\end{equation}
one solves the Poisson equation 
\begin{equation} \label{eq:poisson}
G_Y f_h(x) = h(x) - \E h(Y(\infty)),
\end{equation}
where the generator $G_Y$ of the diffusion process $Y$, applied to a function $f\in C^2(\R^d)$, is given by 
\begin{eqnarray}
&G_Y f(x) = \sum \limits_{i=1}^d \partial_i f(x) \Big{[} p_i \beta - \nu_i(x_i - p_i(e^Tx)^+) - \alpha p_i (e^Tx)^+ + \sum \limits_{j=1}^d P_{ji}\nu_j(x_j-p_j(e^Tx)^+)\Big{]} \notag \\
&+ \frac{1}{2}\sum \limits_{i,j=1}^d \Sigma _{ij} \partial_{ij} f(x)
\quad \text{ for } x\in \R^d. \label{eq:DMgen}
\end{eqnarray}
Then, to bound the difference in (\ref{eq:diff}), it is sufficient to 
find a bound on
\begin{equation}\label{eq:poissonLHS}
\E G_Y f_h(\tilde X^{(\lambda)}(\infty)).
\end{equation}

The following lemma, based on the results of \cite{Gurv2014},  guarantees the existence of a solution to (\ref{eq:poisson}) and provides gradient bounds for it. The proof of this lemma is given in Section \ref{sec:proof-lemma-refl}.

\begin{lemma} \label{lemma:poisson}
For any locally Lipschitz function $h: \R^d \to \R$ satisfying $\abs{h(x)} \leq \abs{x}^{2m}$, equation (\ref{eq:poisson})
has a solution $f_h$. Moreover, there exists a constant $C(m,1)>0$ (depending only on $(\beta,\alpha,p,\nu,P)$) such that  for $x\in \R^d$
\begin{eqnarray}
\abs{f_h(x)} &\leq & C(m,1)(1+\abs{x}^2)^m, \label{eq:gradbound1}\\
\abs{\partial_i f_h(x)} &\leq & C(m,1)(1+\abs{x}^2)^m(1+\abs{x}), \label{eq:gradbound2} \\
\abs{\partial_{ij} f_h(x)} &\leq & C(m,1)(1+\abs{x}^2)^m(1+\abs{x})^2, \label{eq:gradbound3} \\
\sup \limits_{y\in \R^d:\abs{y-x} < 1} \frac{\abs{\partial_{ij} f_h(y)-\partial_{ij}f_h(x)}}{\abs{y-x}} &\leq &C(m,1) (1+\abs{x}^2)^m(1+\abs{x})^3. \label{eq:gradbound4}
\end{eqnarray}
\end{lemma}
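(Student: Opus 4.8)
The plan is to reduce the statement to known results about Poisson equations for diffusion processes by combining three ingredients: a probabilistic (semigroup) representation of a candidate solution $f_h$, a uniform Lyapunov function for the piecewise OU process $Y$ that controls moments of $Y(t)$ started from $x$, and interior Schauder estimates for the elliptic operator $G_Y$. This is exactly the strategy of Gurvich~\cite{Gurv2014}, so much of the work will be bookkeeping to check that the particular process $Y$ in \eqref{eq:defou} satisfies the hypotheses there; the statement of the lemma explicitly says it is ``based on the results of \cite{Gurv2014}.''

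First I would write down the candidate solution
\begin{displaymath}
f_h(x) = -\int_0^\infty \big(\E_x h(Y(t)) - \E h(Y(\infty))\big)\, dt,
\end{displaymath}
and argue that the integral converges and solves \eqref{eq:poisson}. Convergence needs two facts about $Y$: (i) exponential ergodicity, i.e.\ $\abs{\E_x h(Y(t)) - \E h(Y(\infty))}$ decays exponentially in $t$ at a rate uniform over $x$ in compacts, with a prefactor polynomial in $\abs{x}$; and (ii) polynomial moment bounds $\E_x \abs{Y(t)}^{2m} \le C(1+\abs{x}^2)^m$ uniformly in $t$. Both follow from the Lyapunov function for $Y$ constructed in \cite{DiekGao2013} (cited in the excerpt as establishing positive recurrence of $Y$): one upgrades the drift condition $G_Y V \le -c V + b$ with $V$ of order $\abs{x}^{2m}$ to get the moment bound, and combines it with a minorization/ drift argument to get geometric ergodicity. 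The bound \eqref{eq:gradbound1} on $\abs{f_h(x)}$ then comes immediately from integrating (i).

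For the derivative bounds \eqref{eq:gradbound2}--\eqref{eq:gradbound4}, the key tool is interior Schauder estimates (see \cite{GilbTrud1983}): on a unit ball $B(x,1)$, since the drift and diffusion coefficients of $G_Y$ are (locally) Lipschitz and $\Sigma$ is positive definite (uniform ellipticity, constant in $x$), the Schauder estimate bounds $\norm{f_h}_{C^{2,\alpha}(B(x,1/2))}$ in terms of $\norm{f_h}_{C^0(B(x,1))}$ and $\norm{h}_{C^\alpha(B(x,1))}$ (recall $G_Y f_h = h - \E h(Y(\infty))$). Now $\norm{f_h}_{C^0(B(x,1))} \le C(1+\abs{x}^2)^m$ by \eqref{eq:gradbound1} already proved, and $\norm{h}_{C^\alpha(B(x,1))}$ is controlled by $(1+\abs{x})^{2m}$ plus the local Lipschitz modulus, which for a polynomially-bounded locally Lipschitz $h$ grows at most polynomially; but one must be careful that the \emph{local} Schauder constant does not blow up --- here it does not, because the coefficients of $G_Y$ have local Hölder norms on $B(x,1)$ growing only polynomially in $\abs{x}$ (the drift is piecewise linear, $\Sigma$ is constant). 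Tracking the polynomial powers through the Schauder inequality gives exactly the stated powers $(1+\abs{x})$, $(1+\abs{x})^2$, $(1+\abs{x})^3$ for the first, second, and Hölder-seminorm of the second derivatives.

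The main obstacle, and the place where the argument needs real care rather than citation, is establishing the \emph{uniform}-in-$x$ geometric ergodicity and polynomial moment bounds for $Y$ with the correct polynomial prefactors --- i.e.\ turning the qualitative positive recurrence of \cite{DiekGao2013} into the quantitative estimates needed for convergence of the integral defining $f_h$ and for the $C^0$ bound. A secondary technical point is the handling of the nonsmoothness of the drift of $Y$ (the $(e^T x)^+$ terms make the drift only Lipschitz, not $C^1$): one must invoke a version of interior Schauder estimates that only requires Hölder-continuous coefficients, and separately verify that the probabilistic $f_h$ is genuinely $C^2$ and satisfies the PDE classically --- this is a standard but not entirely trivial regularity bootstrap. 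Since the lemma is stated as resting on \cite{Gurv2014}, I would expect the actual proof in Section~\ref{sec:proof-lemma-refl} to cite Gurvich's general theorem for the derivative bounds and only verify his hypotheses for this specific $Y$, doing the moment/ergodicity estimates via the explicit Lyapunov function.
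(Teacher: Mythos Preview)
Your proposal is correct and matches the paper's approach: the actual proof does exactly what you predict in your last paragraph, namely it cites Gurvich's equations (22) and (40) (following his Theorem~4.1) for the bounds \eqref{eq:gradbound1}--\eqref{eq:gradbound4} and spends its effort verifying his Assumption~3.1 for the Lyapunov function $1+V(x)$ of \cite{DiekGao2013} (and its powers via his Remark~3.4). One small item you did not anticipate is that Gurvich's Assumption~3.1 also has a condition (his (17)) involving the CTMC $X^{(\lambda)}$ rather than just the diffusion, which the paper dispatches via the trivial bound $X^{(\lambda)}(t)\le X^{(\lambda)}(0)+n+A^{(\lambda)}(t)$ with $A^{(\lambda)}$ the Poisson arrival process.
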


\subsection{Generator Coupling}
Let $W^{(\lambda)}$ denote the random variable $G_Y f_h(\tilde X^{(\lambda)}(\infty))$ in (\ref{eq:poissonLHS}). To
prove $\abs{\E W^{(\lambda)}}$ small, a common approach in using the Stein's method is to
find a coupling $\tilde W^{(\lambda)}$ for $W^{(\lambda)}$ so that 
\begin{eqnarray}
&&\text{$\abs{\E\tilde W^{(\lambda)}}$ is
small, and }  \hspace{.6\textwidth}\label{item:1}   \\
&&  \text{$\E\abs{W^{(\lambda)}-\tilde W^{(\lambda)}}$ is small.} \label{item:2}
\end{eqnarray}
Constructing an
effective coupling is an art that is problem specific. See \cite{Ross2011}
for a recent survey that includes examples of various couplings.

We use $\tilde W^{(\lambda)} = G_{U^{(\lambda)}} Af_h(U^{(\lambda)}(\infty))$ to construct
the coupling, where $A$ is the lifting operator defined in (\ref{eq:lifter}). The following lemma justifies the coupling propety (\ref{item:1}).
\begin{lemma} \label{lemma:CTMCbar}
Let $h: \R^d \to \R$ satisfy $\abs{h(x)} \leq \abs{x}^{2m}$. The function $f_h$ given by (\ref{eq:poisson}) satisfies
\begin{equation} \label{eq:CTMCbar}
\E G_{U^{(\lambda)}} Af_h(U^{(\lambda)}(\infty)) = 0.
\end{equation}
\end{lemma}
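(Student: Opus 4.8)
The plan is to verify the basic adjoint relationship (BAR) for the CTMC $U^{(\lambda)}$ applied to the lifted test function $Af_h$. Since $U^{(\lambda)}$ is positive recurrent with stationary distribution supported on $\mathcal{U}$, the relation $\E G_{U^{(\lambda)}} F(U^{(\lambda)}(\infty)) = 0$ holds automatically for every $F$ with finite support on $\mathcal{U}$, and more generally for every $F$ for which the relevant sums converge absolutely and a dominated-convergence/truncation argument goes through. The function $Af_h$ does not have finite support, so the real content of the lemma is to push BAR from compactly-supported $F$ to $F = Af_h$. First I would recall from \eqref{eq:tildeG} that $G_{U^{(\lambda)}} Af_h(u)$ depends on $u$ only through $x = \delta(\Pi_X(u) - \gamma n)$ and $q = \Pi_Q(u)$, and that it is a finite sum of differences of $f_h$ evaluated at points within $O(\delta)$ of $x$, weighted by the transition rates $\lambda p_i$, $\alpha q_i$, and $\nu_i z_i$. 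Using the gradient bounds \eqref{eq:gradbound1}–\eqref{eq:gradbound3} from Lemma \ref{lemma:poisson} together with a second-order Taylor expansion, one gets a pointwise bound of the form
\begin{equation*}
  \abs{G_{U^{(\lambda)}} Af_h(u)} \leq C\,(\lambda + \abs{q})\,(1+\abs{x}^2)^{m}(1+\abs{x})^2
\end{equation*}
for a constant $C$ depending on the system parameters but not on $u$. (Here I use $\nu_i z_i \le \nu_i (\Pi_X(u))_i$ and $\abs{z}, \abs{q} \le \abs{\Pi_X(u)}$, and that $\abs{\Pi_X(u)}$ is controlled by $\lambda$, $n$, and $\abs{x}$.)

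The key input is then a moment bound: I need $\E\big[(\lambda + \abs{Q^{(\lambda)}(\infty)})(1+\abs{\tilde X^{(\lambda)}(\infty)}^2)^m(1+\abs{\tilde X^{(\lambda)}(\infty)})^2\big] < \infty$, which follows from finiteness of sufficiently high moments of $\abs{\tilde X^{(\lambda)}(\infty)}$ and $\abs{Q^{(\lambda)}(\infty)}$ — these are guaranteed (for each fixed $\lambda$) by positive recurrence of $U^{(\lambda)}$; the paper's later Lemma on CTMC moments (\texttt{lemma:CTMCmoments}) gives this and more. Granting this integrability, the strategy is a standard truncation argument: let $F_N = Af_h \cdot 1_{\{\abs{u} \le N\}}$ (or, more smoothly, multiply by a cutoff $\chi_N(\abs{u})$ to avoid boundary-transition artifacts), which has finite support, so $\E G_{U^{(\lambda)}} F_N(U^{(\lambda)}(\infty)) = 0$. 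Then I would show $G_{U^{(\lambda)}} F_N(u) \to G_{U^{(\lambda)}} Af_h(u)$ pointwise as $N \to \infty$, and that $\abs{G_{U^{(\lambda)}} F_N(u)}$ is dominated, uniformly in $N$, by an integrable function of $U^{(\lambda)}(\infty)$ — this is where the pointwise bound above plus the moment bound are used, since the extra terms created by the cutoff are supported on $\{\abs{u} \approx N\}$ and vanish in the limit while staying inside the same envelope. Dominated convergence then yields $\E G_{U^{(\lambda)}} Af_h(U^{(\lambda)}(\infty)) = \lim_N \E G_{U^{(\lambda)}} F_N(U^{(\lambda)}(\infty)) = 0$.

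The main obstacle I anticipate is the interplay between the truncation and the combinatorial structure of $\mathcal{U}$: cutting off by $\abs{u} \le N$ is not a clean restriction because transitions change $\abs{u}$ by $\pm 1$ (arrivals, service completions) or $0$ (phase changes), so $G_{U^{(\lambda)}} F_N$ picks up extra boundary terms at $\abs{u} = N$ and $\abs{u} = N+1$. I would handle this by choosing the cutoff to be Lipschitz in $\abs{u}$ with slope $O(1/N)$ so these boundary contributions are themselves $O(1/N)$ times the same polynomial envelope, hence integrable and vanishing; alternatively one can invoke a general BAR-extension result (of the type in \cite{DaiDiekGao2014}, or the classical criterion that BAR holds for $F$ whenever $F$ and $G_{U^{(\lambda)}} F$ are both in $L^1$ of the stationary distribution and a suitable localizing sequence exists). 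Either way, the analytic heart is the uniform-in-$N$ domination, which reduces cleanly to the gradient bounds of Lemma \ref{lemma:poisson} and finiteness of the relevant stationary moments of $U^{(\lambda)}$.
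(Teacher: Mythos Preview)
Your approach is correct but takes a more hands-on route than the paper's. The paper simply invokes a known sufficient condition for BAR due to Henderson (alternatively Glynn--Zeevi): for a CTMC with generator $G_{U^{(\lambda)}}$ and stationary distribution $U^{(\lambda)}(\infty)$, the identity $\E G_{U^{(\lambda)}} F(U^{(\lambda)}(\infty))=0$ holds for any $F$ satisfying
\[
\E\big[\abs{G_{U^{(\lambda)}}(U^{(\lambda)}(\infty),U^{(\lambda)}(\infty))}\,\abs{F(U^{(\lambda)}(\infty))}\big]<\infty,
\]
where $G_{U^{(\lambda)}}(u,u)$ is the diagonal rate. For $F=Af_h$ this diagonal rate is $\lambda+\alpha(e^TX^{(\lambda)}(\infty)-n)^++\sum_i\nu_i Z^{(\lambda)}_i(\infty)$, which is dominated by $\lambda+C\,e^TX^{(\lambda)}(\infty)$, and $\abs{Af_h}=\abs{f_h(\tilde X^{(\lambda)}(\infty))}$ is controlled by the zeroth-order bound \eqref{eq:gradbound1} alone. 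Finiteness then follows immediately from the exponential-moment Lemma~\ref{lem:finteexpmoment}. Your argument instead bounds $\abs{G_{U^{(\lambda)}} Af_h(u)}$ directly via Taylor expansion---which requires the first- and second-derivative bounds \eqref{eq:gradbound2}--\eqref{eq:gradbound3}---and then carries out a truncation/dominated-convergence argument by hand, in effect reproving a special case of the cited criterion. Both routes work; the paper's is shorter and needs less from Lemma~\ref{lemma:poisson}.

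One caution: you cite \texttt{lemma:CTMCmoments} for the needed moment input, but that lemma's proof itself uses BAR (it relies on $\E G_{U^{(\lambda)}} AV_m(U^{(\lambda)}(\infty))=0$, justified by the very argument of Lemma~\ref{lemma:CTMCbar}), so invoking it here would be circular. The correct input is Lemma~\ref{lem:finteexpmoment}, which is proved independently via a Foster--Lyapunov bound and already gives all finite moments of $e^TX^{(\lambda)}(\infty)$ for each fixed $\lambda$.
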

To prove the lemma, we need finite moments of the steady-state system size.

\begin{lemma}\label{lem:finteexpmoment}
(a) Let  $L(u) =\exp(e^T \Pi_X(u))$ for $u \in \mathcal{U}$. Then 
\begin{equation} \label{eq:CTMCmgfexist}
\E L(U^{(\lambda)}(\infty))<\infty.
\end{equation}
(b) all moments of $e^TX^{(\lambda)}(\infty)$ are finite.
\end{lemma}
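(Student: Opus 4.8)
The plan is to exploit the fact that the total number in system $e^TX^{(\lambda)}(t)$ is dominated, in a suitable stochastic sense, by a much simpler one-dimensional birth-death process, and then to use a Lyapunov/drift argument for the underlying CTMC $U^{(\lambda)}$. First I would set $N^{(\lambda)}(t) = e^T\Pi_X(U^{(\lambda)}(t)) = \abs{U^{(\lambda)}(t)}$, the total head count. Arrivals occur at constant rate $\lambda$, so the upward jump rate of $N^{(\lambda)}$ is exactly $\lambda$ in every state. The downward transitions come from service completions and abandonments: when there are $k$ customers in system, at least $\max(k-n,0)$ of them are in queue (so the abandonment rate is $\alpha\max(k-n,0)$), and the number in service is $\min(k,n)$, each in some phase $i$ completing service (leaving the system rather than changing phase) at rate $\nu_i(1-\sum_j P_{ij})$. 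Phase changes do not alter $N^{(\lambda)}$. Hence the total downward rate out of a state with $\abs{u}=k$ is at least $\alpha\max(k-n,0) \to\infty$ as $k\to\infty$, which already gives positive recurrence; the point here is to get the stronger exponential-moment bound.

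For part (a), I would verify the geometric-drift (Foster–Lyapunov / Meyn–Tweedie) condition for $L(u)=\exp(\abs{u})$. Applying $G_{U^{(\lambda)}}$ to $L$, the upward moves contribute $\lambda(e^1-1)L(u)$, and the abandonment moves contribute (when $\abs{u}=k>n$) at least $\alpha(k-n)(e^{-1}-1)L(u)$, with the phase-completion moves contributing a further non-positive term (they are downward or neutral). Thus
\begin{displaymath}
G_{U^{(\lambda)}} L(u) \le \Big[\lambda(e-1) - \alpha(\abs{u}-n)^+(1-e^{-1})\Big]L(u),
\end{displaymath}
and the bracket is $\le -c\,L(u)$ outside the finite set $\{\abs{u}\le n + \lambda(e-1)/(\alpha(1-e^{-1})) + 1\}$, for some $c>0$, while it is bounded on that finite set. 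By the standard drift criterion for exponential ergodicity (e.g. the results surveyed in \cite{MeynTweedie}, or a direct argument using that $G_{U^{(\lambda)}}L$ is integrable under the stationary distribution because $U^{(\lambda)}$ is positive recurrent), this yields $\E L(U^{(\lambda)}(\infty))<\infty$. One technical care point: to conclude $\E G_{U^{(\lambda)}} L(U^{(\lambda)}(\infty)) = 0$ or an inequality of that type, one should work with truncated/bounded versions $L_M = L \wedge M$ or stopped processes and pass to the limit by monotone convergence, since $L$ itself is unbounded — this, together with justifying that the formal generator computation is valid, is where the only real care is needed. Part (b) is then immediate: since $e^TX^{(\lambda)}(\infty) = \abs{U^{(\lambda)}(\infty)}$ and $e^{e^TX^{(\lambda)}(\infty)}$ has finite mean, every polynomial moment $\E[(e^TX^{(\lambda)}(\infty))^k]$ is finite as well, because $x^k \le k!\,e^x$ for $x\ge 0$.

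The main obstacle I anticipate is not the drift inequality itself — that is a short computation — but rather the measure-theoretic bookkeeping needed to turn the pointwise drift bound into a statement about the stationary expectation when the Lyapunov function is unbounded and the state space $\mathcal{U}$ is an infinite set of finite sequences. The clean way around this is to fix a large threshold $M$, apply the generator to $L_M(u) = \exp(\min(\abs{u},M))$ (which is bounded, so $\E G_{U^{(\lambda)}} L_M(U^{(\lambda)}(\infty)) = 0$ holds by the basic adjoint relationship \eqref{eq:bar} for bounded functions), extract a uniform-in-$M$ bound on $\E L_M(U^{(\lambda)}(\infty))$ from the drift inequality restricted to $\{\abs{u}<M\}$, and finally let $M\to\infty$ by monotone convergence. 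I would present the argument in that order: (i) define $N^{(\lambda)}$ and identify its jump rates; (ii) state and prove the drift inequality for $L_M$; (iii) deduce the uniform bound and pass to the limit for (a); (iv) deduce (b) from (a).
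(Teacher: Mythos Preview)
Your proposal is correct and follows essentially the same route as the paper: both establish the drift inequality $G_{U^{(\lambda)}} L(u) \le \lambda(e-1)L(u) - \alpha(e^T\Pi_X(u)-n)^+(1-e^{-1})L(u)$ for $L(u)=\exp(e^T\Pi_X(u))$ and deduce (a) via a Foster--Lyapunov argument, then get (b) immediately. The paper is terser, dispatching the passage from the drift bound to $\E L(U^{(\lambda)}(\infty))<\infty$ by a direct citation to \cite[Theorem 4.2]{MeynTwee1993b} rather than spelling out a truncation argument; your proposed $L_M$ route is a perfectly valid way to make that step explicit.
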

\begin{proof}
One may verify that
\begin{displaymath}
G_{U^{(\lambda)}} L(u) \leq \lambda(\exp(1)-1)L(u) - \alpha (e^T \Pi_X(u) -n)^+(1- \exp(-1))L(u).
\end{displaymath}
It follows that there exist a positive constant $C= C(\lambda, n, \alpha)$ such that,  whenever $e^T \Pi_X(u)$ is large enough,
\begin{equation} \label{eq:CTMCfosterlyap}
G_{U^{(\lambda)}} L(u) \leq -CL(u) + 1.
\end{equation}
Part (a) follows from \cite[Theorem 4.2]{MeynTwee1993b}. Part (b)
follows from \eqref{eq:CTMCmgfexist} and the equality $e^T
\Pi_X(U^{(\lambda)}(\infty)) = e^T X^{(\lambda)}(\infty)$.
\end{proof}
The function $L(u)$ is said to be a Lyapunov function.
Inequality (\ref{eq:CTMCfosterlyap}) is known as a Foster-Lyapunov
condition and guarantees that the CTMC is positive recurrent; see, for example, 
\cite{MeynTwee1993b}.

\begin{proof}[Proof of Lemma~\ref{lemma:CTMCbar}]
A sufficient condition for (\ref{eq:CTMCbar}) to hold is given by \cite[Proposition 1.1]{Hend1997} (alternatively, see \cite[Proposition 3]{GlynZeev2008}), namely
\begin{equation}
  \label{eq:glynnzeevi}
  \E\Big[ \abs{G_{U^{(\lambda)}}(U^{(\lambda)}(\infty),U^{(\lambda)}(\infty))} \abs{Af_h(U^{(\lambda)}(\infty))}\Big]<\infty.
\end{equation}
Above, $G_{U^{(\lambda)}}(u,u)$ is the $u$th diagonal entry of the generator matrix $G_{U^{(\lambda)}}$.
 In our case, the left side of (\ref{eq:glynnzeevi}) is equal to 
\begin{eqnarray*}
& =& \E\Big[ \abs{G_{U^{(\lambda)}}(U^{(\lambda)}(\infty),U^{(\lambda)}(\infty))} \abs{f_h(\tilde X^{(\lambda)}(\infty))}\Big]\\
&=&\E \abs{\lambda + \alpha(e^TX^{(\lambda)}(\infty) - n)^+ + \sum \limits_{i=1}^d \nu_i (X^{(\lambda)}_i(\infty) - Q^{(\lambda)}_i(\infty)} \abs{ f_h(\tilde X^{(\lambda)}(\infty))} \notag \\
&\leq & \E \abs{\lambda + (\alpha \vee \max_i \{\nu_i\}) e^TX^{(\lambda)}(\infty)} \abs{ f_h(\tilde X^{(\lambda)}(\infty))},
\end{eqnarray*}
where the first equality follows from \eqref{eq:Xinfty} and \eqref{eq:lifter}.
One may apply (\ref{eq:gradbound1}) and (\ref{eq:CTMCmgfexist}) to see that the quantity above is finite.
\end{proof}
\subsection{Taylor Expansion}

To prove that the coupling $\tilde W^{(\lambda)} = G_{U^{(\lambda)}} Af_h(U^{(\lambda)}(\infty))$ satisfies the coupling property (\ref{item:2}), we need to prove 
that
\begin{equation*}
  \label{eq:compareGener}
\E\abs{W^{(\lambda)}-\tilde W^{(\lambda)}}  = \E \abs{G_{U^{(\lambda)}} Af_h(U^{(\lambda)}(\infty)) - G_Y f_h(\tilde X^{(\lambda)}(\infty))}
\end{equation*}
is small.  For that, we compare the generator $G_{U^{(\lambda)}}$ of the
CTMC with $G_Y$.
By performing Taylor expansion on $G_{U^{(\lambda)}} Af_h(u)$ in (\ref{eq:tildeG}), one has 
\begin{eqnarray}
G_{U^{(\lambda)}} Af_h(u) &=& \sum \limits_{i=1}^d \lambda p_i\big( \delta \partial_i f_h(x) + \frac{\delta^2}{2} \partial_{ii}f_h(\xi_{i}^+)\big) +\alpha q_i\big(-\delta \partial_i f_h(x)+  \frac{\delta^2}{2}\partial_{ii}f_h(\xi_{i}^-)\big)\nonumber\\
&& + \sum \limits_{i=1}^d \nu_i z_i\Big{[}(1-\sum \limits_{j=1}^d P_{ij})\big(-\delta \partial_i f_h(x)+ \frac{\delta^2}{2}\partial_{ii}f_h(\xi_{i}^-)\big)
+\sum \limits_{j=1}^d P_{ij}\Big(-\delta \partial_i f_h(x) 
\nonumber \\
&& \qquad  + \delta \partial_j f_h(x) 
+ \frac{\delta^2}{2}\partial_{ii}f_h(\xi_{ij}) + \frac{\delta^2}{2}\partial_{jj}f_h(\xi_{ij})- \delta^2 \partial_{ij}f_h(\xi_{ij})\Big)\Big{]}, \label{eq:GXtaylor}
\end{eqnarray}
where $\xi_{i}^+ \in [x, x+\delta e^{(i)}]$, $\xi_{i}^-\in
[x-\delta e^{(i)}, x] $ and $\xi_{ij}$ lies somewhere between $x$ and
$x-\delta e^{(i)}+\delta e^{(j)}$.  Using the gradient bounds in Lemma~\ref{lemma:poisson}, we have the following lemma, which will be proved 
in Section \ref{app:taylorexp}.

\begin{lemma}\label{lemma:diffbound} There exists a constant
  $C(m,2)>0$ (depending only on $(\beta,\alpha,p,\nu,P)$) such that for any $u \in \mathcal{U}$,
  \begin{eqnarray}
    \label{eq:diffbound}
&&G_{U^{(\lambda)}} Af_h(u) - G_Y f_h(x) \nonumber\\
&=&  \sum \limits_{i=1}^d \partial_i f_h(x)\Big{[}(\nu_i - \alpha-\sum \limits_{j=1}^d P_{ji}\nu_j)( \delta q_i - p_i(e^T x)^+)\Big{]} + E(u),
  \end{eqnarray}
  where $q$ and $x$ are as in (\ref{eq:projlittle}), $\delta$ as in \eqref{eq:delta}, and $E(u)$ is an error term that satisfies 

\begin{displaymath}
\abs{E(u)} \leq \delta\, C(m,2) (1+\abs{x}^2)^m (1+\abs{x})^4.
\end{displaymath}
\end{lemma}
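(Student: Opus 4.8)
The plan is to massage the exact identity \eqref{eq:GXtaylor} --- in which $\xi_i^+,\xi_i^-,\xi_{ij}$ are genuine mean-value points, so \eqref{eq:GXtaylor} is an equality, not an approximation --- by splitting its right-hand side into a \emph{first-order part}, the terms carrying a single derivative $\partial_i f_h$, and a \emph{second-order part}, the terms carrying a Hessian entry $\partial_{ij}f_h$ evaluated at one of the $\xi$'s. I will show that the first-order part equals the drift part of $G_Y f_h(x)$ plus exactly the displayed single-derivative term, with \emph{no} error, and that the second-order part equals $\tfrac12\sum_{i,j}\Sigma_{ij}\partial_{ij}f_h(x)$ plus a remainder carrying a factor $\delta$. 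Subtracting $G_Y f_h(x)$ as in \eqref{eq:DMgen} then leaves \eqref{eq:diffbound}, with $E(u)$ equal to that remainder.

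For the first-order part, collecting from \eqref{eq:tildeG} the $\delta^1$-coefficients of $\partial_i f_h(x)$ contributed by the arrival, abandonment and service terms gives $\delta\big[\lambda p_i-\alpha q_i-\nu_i z_i+\sum_j\nu_j z_j P_{ji}\big]$. I substitute into this the structural identities that come with the scaling: from \eqref{eq:scaledctmcsd}, $\delta\Pi_X(u)=x+\delta\gamma n$, hence $\delta z=x+\delta\gamma n-\delta q$; since $e^Tq=(\abs{u}-n)^+$ and $\abs{u}=e^T\Pi_X(u)=n+e^Tx/\delta$ (using $e^T\gamma=1$), one has $\delta\,e^Tq=(e^Tx)^+$, so each $\delta q_i$ can be written $(\delta q_i-p_i(e^Tx)^+)+p_i(e^Tx)^+$; and from \eqref{eq:square-root}, $\delta n\mu=\sqrt\lambda+\beta$. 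Feeding these in, the $O(\sqrt\lambda)$ contributions are $\sqrt\lambda\,p_i$ from arrivals and $-\delta n\,(\nu_i\gamma_i-\sum_j P_{ji}\nu_j\gamma_j)=-\delta n\,(R\gamma)_i=-\delta n\mu\,p_i=-(\sqrt\lambda+\beta)p_i$ from the $\delta\gamma n$ piece of service; these cancel up to the constant $-\beta p_i$ because $\gamma=\mu R^{-1}p$ forces $R\gamma=\mu p$ (this is the statement that $\gamma n$ is the fluid equilibrium). The remaining constant, linear-in-$x$, and $(e^Tx)^+$ terms then reassemble exactly into the coefficient of $\partial_i f_h$ in the drift \eqref{eq:DMgen} --- the $\beta p_i$ term, the reflected term $-(R(x-p(e^Tx)^+))_i$, and $-\alpha p_i(e^Tx)^+$ --- plus exactly the lemma's single-derivative term, in which every surviving appearance of $q$ has been absorbed into $\delta q_i-p_i(e^Tx)^+$. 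Since all substitutions here are exact, this part contributes nothing to $E(u)$.

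For the second-order part, every Hessian is evaluated at a point $\xi$ with $\abs{\xi-x}\le\sqrt2\,\delta<1$ (we may take $\lambda\ge4$ by Remark~\ref{rem:lambdarange}), so replacing $\xi$ by $x$ costs, by \eqref{eq:gradbound4}, at most $\sqrt2\,\delta\,C(m,1)(1+\abs{x}^2)^m(1+\abs{x})^3$ times the coefficient of that Hessian term. Those coefficients are $\lambda p_i\delta^2=p_i$ for arrivals, $\alpha q_i\delta^2=\alpha\delta\cdot\delta q_i\le\alpha\delta(e^Tx)^+$ for abandonment, and $\nu_i z_i\delta^2$ for service; from $\delta^2\Pi_X(u)=\delta^2\gamma n+\delta x$, $\delta^2 n\mu=1+\beta\delta$, and $0\le\delta^2 q_i\le\delta(e^Tx)^+$, one gets $\nu_i z_i\delta^2=\nu_i\gamma_i/\mu+O(\delta(1+\abs{x}))$, so each $\xi\mapsto x$ replacement error is at most $C\,\delta(1+\abs{x}^2)^m(1+\abs{x})^4$. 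After the replacement, the second-order part is $\sum_{i,j}\tilde c_{ij}\,\partial_{ij}f_h(x)$ for a (state-dependent) matrix $\tilde c$ whose $O(1)$ part equals $\tfrac12\Sigma$ --- this is the matrix identity one checks directly against \eqref{eq:OUdiffusioncoeff}, matching the $\text{diag}(p)$ part, the service-completion diagonal, and the $H^k$-type cross terms --- and whose $O(\delta(1+\abs{x}))$ remainder, multiplied by $\abs{\partial_{ij}f_h(x)}\le C(1+\abs{x}^2)^m(1+\abs{x})^2$ from \eqref{eq:gradbound3}, contributes at most $C\,\delta(1+\abs{x}^2)^m(1+\abs{x})^3$. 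The piece $\tfrac12\sum_{i,j}\Sigma_{ij}\partial_{ij}f_h(x)$ cancels the second-order term of $G_Y f_h(x)$, and summing the finitely many ($d$-dependent) leftover pieces yields $\abs{E(u)}\le\delta\,C(m,2)(1+\abs{x}^2)^m(1+\abs{x})^4$.

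The hard part is not any one estimate but keeping the bookkeeping \emph{uniform in $\lambda$} (equivalently in $n$). Two facts carry the argument. First, the exact reorganization of the first-order part hinges on $R\gamma=\mu p$: it is this identity that cancels the $O(\sqrt\lambda)$ drift and leaves precisely the drift of $G_Y$ together with the stated single-derivative term; the analogous identification of the $O(1)$ Hessian coefficient with $\tfrac12\Sigma$ rests on the specific algebraic form of \eqref{eq:OUdiffusioncoeff}, and in particular on the $H^k$ matrices that encode the covariances of the service-phase routing. Second, the queue content $q$ is \emph{not} controlled by $\abs{x}$ --- a priori $q$ can be large while $x$ is moderate --- and the only reason it survives the estimates is that it enters the first-order part exclusively through $\delta q_i-p_i(e^Tx)^+$, which satisfies $\abs{\delta q_i-p_i(e^Tx)^+}\le(e^Tx)^+\le\sqrt d\,\abs{x}$ by $\delta\,e^Tq=(e^Tx)^+$, and enters the second-order part always with a spare factor $\delta$, so it is absorbed into $E(u)$. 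Tracking these two facts throughout is what makes $C(m,2)$ independent of $\lambda$ and $n$.
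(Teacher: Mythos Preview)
Your argument is correct and follows the same route as the paper's proof: both start from the exact Taylor identity \eqref{eq:GXtaylor}, regroup by derivative order, use $R\gamma=\mu p$ (the paper writes this as \eqref{eq:taylorexpidentity}) to collapse the first-order part exactly into the drift of $G_Y$ plus the displayed single-derivative term, and then bound the second-order remainder via \eqref{eq:gradbound3}--\eqref{eq:gradbound4} together with the size estimates $(n\delta^2-1)=\beta\delta$, $\delta q_i\le(e^Tx)^+$, and $\delta^2 z_i=\nu_i\gamma_i/\mu+O(\delta(1+\abs{x}))$. Your organization into ``first-order part exact, second-order part carries all of $E(u)$'' is a clean way to phrase what the paper does by writing out \eqref{eq:errorterm} explicitly and declaring $E(u)$ to be everything below the first line.
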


\section{State Space Collapse}\label{sec:state-space-collapse}

One of the challenges we face comes from the fact that our CTMC $U^{(\lambda)}$ is infinite-dimensional, while the approximating diffusion process is only $d$-dimensional. 
Recall the process $(X^{(\lambda)},Q^{(\lambda)})$ defined in (\ref{eq:projectionCTMC}) and the lifting operator $A$ acting on functions $f:\R^d \to \R$, as defined in (\ref{eq:lifter}). When acting on the lifted functions $Af(U^{(\lambda)}(\infty))$, the CTMC generator $G_{U^{(\lambda)}}$ depends on both $\tilde X^{(\lambda)}(\infty)$ and $Q^{(\lambda)}(\infty)$, but its approximation $G_Y f(\tilde X^{(\lambda)}(\infty))$ only depends on $\tilde X^{(\lambda)}(\infty)$. This is captured in \eqref{eq:diffbound} by the term
\begin{displaymath}
\sum \limits_{i=1}^d \partial_i f_h(x)\Big{[}(\nu_i - \alpha-\sum \limits_{j=1}^d P_{ji}\nu_j)( \delta q_i - p_i(e^T x)^+)\Big{]}.
\end{displaymath}
To bound this term, observe that for any $1 \leq i \leq d$, 
\begin{eqnarray}
&&\Big(\nu_i - \alpha-\sum \limits_{j=1}^d P_{ji}\nu_j\Big)\partial_i f_h(x)\big( \delta q_i - p_i(e^T x)^+\big) \notag \\
 &=& \Big(\nu_i - \alpha-\sum \limits_{j=1}^d P_{ji}\nu_j\Big)\Big(\partial_i f_h(x)- \partial_i f_h\big(x - \delta q + p(e^T x)^+\big) \Big)\big( \delta q_i - p_i(e^T x)^+\big) \notag  \\
 &&+\ \Big(\nu_i - \alpha-\sum \limits_{j=1}^d P_{ji}\nu_j\Big)\partial_i f_h\big(x - \delta q + p(e^T x)^+\big)\big( \delta q_i - p_i(e^T x)^+\big) \notag \\
 &=& \Big(\nu_i - \alpha-\sum \limits_{j=1}^d P_{ji}\nu_j\Big)\sum_{k=1}^{d}\partial_{ik} f_h(\xi)( \delta q_k - p_k(e^T x)^+)\big( \delta q_i - p_i(e^T x)^+\big)  \notag \\
 &&+\ \Big(\nu_i - \alpha-\sum \limits_{j=1}^d P_{ji}\nu_j\Big)\partial_i f_h\big(\delta (z - \gamma n) + p(e^T x)^+\big)\big( \delta q_i - p_i(e^T x)^+\big), \label{eq:interm_ssc}
\end{eqnarray}
where $z$, defined in \eqref{eq:projlittle}, is a vector that represents the number of customers of each type in service, and $\xi$ is some point between $x$ and $x - \delta q + p(e^T x)^+$. In particular, there exists some constant $C$ that doesn't depend on $\lambda$ and $n$, such that
\begin{equation} \label{eq:xi_ineq}
\abs{\xi} \leq \abs{x}  + \delta \abs{q} + \abs{p}(e^T x)^+ \leq C \abs{x},
\end{equation}
because $\delta q_i \leq (e^T x)^+$ for each $1 \leq i \leq d$ (i.e.\ the number of phase $i$ customers in queue can never exceed the queue size).

In order to bound the expected value of \eqref{eq:interm_ssc}, we must prove a
relationship between $\tilde X^{(\lambda)}(\infty)$ and $Q^{(\lambda)}(\infty)$. Intuitively, the number of customers of phase $i$
waiting in the queue should be approximately equal to a fraction $p_i$
of the total queue size. The following two lemmas bound
the error caused by the SSC approximation. They are proved at the end of this section.

\begin{lemma}
\label{lemma:SSC}
Let $Z^{(\lambda)}(\infty) = X^{(\lambda)}(\infty) - Q^{(\lambda)}(\infty)$  be the vector representing the number of customers of each type in service in steady-state. Then conditioned on $(e^T \tilde X^{(\lambda)}(\infty))^+$, the random vectors $Q^{(\lambda)}(\infty)$ and $Z^{(\lambda)}(\infty)$ are independent.
Furthermore,
\begin{equation} \label{eq:multinom}
\E \Big [\delta Q^{(\lambda)}(\infty) - p(e^T \tilde X^{(\lambda)}(\infty))^+ \Big|\ (e^T \tilde X^{(\lambda)}(\infty))^+ \Big] = 0,
\end{equation}
and for any integer $m>0$, there exists $C(m, 3)>0$ (depending only on $(\beta,\alpha,p,\nu,P)$) such that for all $\lambda>0$ and $n\ge 1$ satisfying \eqref{eq:square-root},
\begin{equation}
  \label{eq:sscscaled}
  \E \Big [\abs{\delta Q^{(\lambda)}(\infty) - p(e^T \tilde X^{(\lambda)}(\infty))^+}^{2m}\Big ] \leq \delta^m\, C(m, 3)\E [(e^T \tilde X^{(\lambda)}(\infty))^+]^m,
\end{equation}
where $\delta=1/\sqrt{\lambda}$ as in \eqref{eq:delta}. 
\end{lemma}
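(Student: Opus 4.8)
The plan is to deduce all three displayed assertions from a single structural fact about the steady-state queue. First, a bookkeeping remark: since all $n$ servers are busy whenever $e^TX^{(\lambda)}(\infty)>n$, we have $(e^TX^{(\lambda)}(\infty)-n)^+=e^TQ^{(\lambda)}(\infty)$, and because $e^T\gamma=1$ it follows that $(e^T\tilde X^{(\lambda)}(\infty))^+=\delta(e^TX^{(\lambda)}(\infty)-n)^+=\delta\,e^TQ^{(\lambda)}(\infty)$. Thus conditioning on $(e^T\tilde X^{(\lambda)}(\infty))^+$ is the same as conditioning on the integer-valued total queue length $K^{(\lambda)}(\infty):=e^TQ^{(\lambda)}(\infty)$, and the fact I would establish is: \emph{conditional on $K^{(\lambda)}(\infty)$, the vector $Q^{(\lambda)}(\infty)$ is multinomial with $K^{(\lambda)}(\infty)$ trials and cell-probability vector $p$, and is conditionally independent of $Z^{(\lambda)}(\infty)$.}

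Granting this, the three conclusions are short. The asserted conditional independence of $Q^{(\lambda)}(\infty)$ and $Z^{(\lambda)}(\infty)$ is immediate (and trivial on $\{K^{(\lambda)}(\infty)=0\}$, where $Q^{(\lambda)}(\infty)=0$). Since a $\mathrm{Multinomial}(k,p)$ vector has mean $kp$, $\E[\delta Q^{(\lambda)}(\infty)\mid K^{(\lambda)}(\infty)]=\delta K^{(\lambda)}(\infty)\,p=p\,(e^T\tilde X^{(\lambda)}(\infty))^+$, which is \eqref{eq:multinom}; the integrability required here is $\E[K^{(\lambda)}(\infty)]\le \E[e^TX^{(\lambda)}(\infty)]<\infty$, supplied by Lemma~\ref{lem:finteexpmoment}(b). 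For \eqref{eq:sscscaled}, note $\delta Q^{(\lambda)}(\infty)-p(e^T\tilde X^{(\lambda)}(\infty))^+=\delta\big(Q^{(\lambda)}(\infty)-K^{(\lambda)}(\infty)p\big)$, and conditional on $K^{(\lambda)}(\infty)=k\ge1$ the vector $Q^{(\lambda)}(\infty)-kp$ is a sum of $k$ i.i.d.\ mean-zero random vectors of norm at most $2$, so a standard moment inequality for such sums (e.g.\ Rosenthal's, applied coordinatewise) gives $\E\big[\abs{Q^{(\lambda)}(\infty)-K^{(\lambda)}(\infty)p}^{2m}\mid K^{(\lambda)}(\infty)=k\big]\le C(m,3)\,k^{m}$, the left side vanishing when $k=0$. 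Taking expectations and using $K^{(\lambda)}(\infty)=\delta^{-1}(e^T\tilde X^{(\lambda)}(\infty))^+$ together with $\E[K^{(\lambda)}(\infty)^m]<\infty$ (again Lemma~\ref{lem:finteexpmoment}(b)) yields $\E\big[\abs{\delta Q^{(\lambda)}(\infty)-p(e^T\tilde X^{(\lambda)}(\infty))^+}^{2m}\big]=\delta^{2m}\E\big[\abs{Q^{(\lambda)}(\infty)-K^{(\lambda)}(\infty)p}^{2m}\big]\le\delta^{2m}C(m,3)\E[K^{(\lambda)}(\infty)^m]=\delta^{m}C(m,3)\E\big[(e^T\tilde X^{(\lambda)}(\infty))^+\big]^{m}$, which is \eqref{eq:sscscaled}.

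The substance lies in proving the structural fact. I would realize $U^{(\lambda)}$ on an enlarged probability space by \emph{deferred phase assignment}: each arrival receives an independent $\mathrm{Exp}(\alpha)$ patience clock but carries no service phase while she waits; the instant she reaches the head of the FIFO queue and a server frees, she is assigned an initial phase $\sim p$ together with the rest of her phase-type trajectory, independently of everything so far. This has the law of $U^{(\lambda)}$, since a waiting customer's phase affects the dynamics only at her service-entry epoch, and drawing it there rather than on arrival is distributionally equivalent. The point is that the queue-length process and the in-service configuration, hence $Z^{(\lambda)}(\cdot)$ and $K^{(\lambda)}(\cdot):=e^TQ^{(\lambda)}(\cdot)$, are adapted to the filtration $\mathcal F_t$ generated by the arrival epochs, the patience clocks, and the phase-type paths of customers who have already entered service — arrivals are Poisson, an abandonment is a memoryless and phase-independent event, and a service completion merely removes the oldest in-service customer and promotes the head-of-line customer — while, starting the chain empty, the phases of the customers still waiting at time $t$ are, conditional on $\mathcal F_t$, i.i.d.\ with law $p$ (the identity of a waiting customer and the event that she is still waiting do not depend on her own phase). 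Hence, for every fixed $t$, conditional on $(K^{(\lambda)}(t),Z^{(\lambda)}(t))$ the vector $Q^{(\lambda)}(t)$ is $\mathrm{Multinomial}(K^{(\lambda)}(t),p)$ and independent of $Z^{(\lambda)}(t)$; equivalently $\Prob(Q^{(\lambda)}(t)=q,\,Z^{(\lambda)}(t)=z)$ equals the $\mathrm{Multinomial}(e^Tq,p)$ probability of $q$ times $\Prob(K^{(\lambda)}(t)=e^Tq,\,Z^{(\lambda)}(t)=z)$. Since $U^{(\lambda)}$ is positive recurrent on the countable set $\mathcal U$, $U^{(\lambda)}(t)\to U^{(\lambda)}(\infty)$ in total variation, so both sides of this identity converge to the corresponding stationary quantities; letting $t\to\infty$ gives the structural fact.

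I expect the main obstacle to be the deferred-phase-assignment step: one must verify carefully that this construction really is a version of $U^{(\lambda)}$ and that $\mathcal F_t$ genuinely carries the queue-length and in-service processes without secretly encoding the phases of the waiting customers. Once that coupling is set up cleanly, the passage to stationarity (needing only countability of $\mathcal U$ and convergence to equilibrium) and the multinomial moment computations are routine.
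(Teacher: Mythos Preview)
Your proposal is correct and follows essentially the same strategy as the paper: both arguments rest on the observation that the phase labels of customers still waiting in queue at time $t$ have not yet influenced the dynamics (abandonment is phase-blind, and service has not begun), so conditional on the queue length and the in-service configuration these labels are i.i.d.\ $p$; both then pass from finite $t$ to stationarity. The paper phrases the independence via arrival-indexed Bernoulli indicators $\xi_j$ and argues directly that $\{\xi_{B(t)+j}\}$ is independent of $(A(t),B(t),Z^{(\lambda)}(t),\{\zeta_{B(t)+j}(t)\})$, whereas you repackage the same idea as a deferred-phase-assignment coupling; the paper handles each coordinate as a centered binomial and cites an explicit $2m$th-moment bound, while you treat the full vector via Rosenthal; and the paper passes to $t=\infty$ by truncation plus dominated/monotone convergence, while you invoke total-variation convergence of the positive-recurrent chain on the countable state space. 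These are presentational variations on the same proof.
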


\begin{lemma}
\label{lemma:SSC_bound}
For any integer $m>0$, there exists $C(m, 4)>0$ (depending only on $(\beta,\alpha,p,\nu,P)$) such that for any locally Lipschitz function $h: \R^d \to \R$ satisfying $\abs{h(x)} \leq \abs{x}^{2m}$, and all $\lambda>0$ and $n\ge 1$ satisfying \eqref{eq:square-root}
\begin{eqnarray}
&&\abs{\sum \limits_{i=1}^d  \E \bigg[\partial_i f_h(\tilde X^{(\lambda)}(\infty))\Big{[}(\nu_i - \alpha-\sum \limits_{j=1}^d P_{ji}\nu_j)( \delta Q^{(\lambda)}_i(\infty) - p_i(e^T \tilde X^{(\lambda)}(\infty))^+)\Big{]}\bigg]} \notag \\
&\leq & \delta C(m, 4)\E \Big[\big((e^T \tilde X^{(\lambda)}(\infty))^+\big)^2\Big]\sqrt{\E \Big[1 + \abs{\tilde X^{(\lambda)}(\infty)}^8\Big]}
\end{eqnarray}
where $f_h(x)$ is the solution to the Poisson equation \eqref{eq:poisson}.
\end{lemma}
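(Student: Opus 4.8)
The plan is to start from the pointwise identity \eqref{eq:interm_ssc}, which is already established in the excerpt. Evaluating it at $u=U^{(\lambda)}(\infty)$, so that $x=\tilde X^{(\lambda)}(\infty)$, $q=Q^{(\lambda)}(\infty)$ and $z=Z^{(\lambda)}(\infty)$, each summand inside the absolute value on the left-hand side of the lemma decomposes into a \emph{second-order piece} $c_i\sum_{k}\partial_{ik}f_h(\xi)\big(\delta q_k-p_k(e^Tx)^+\big)\big(\delta q_i-p_i(e^Tx)^+\big)$ and a \emph{first-order remainder} $c_i\,\partial_i f_h\big(\delta(z-\gamma n)+p(e^Tx)^+\big)\big(\delta q_i-p_i(e^Tx)^+\big)$, where $c_i=\nu_i-\alpha-\sum_j P_{ji}\nu_j$ is a constant. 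I would take expectations and handle the two pieces separately.

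The first step is to show the remainder piece contributes nothing. The argument $\delta(z-\gamma n)+p(e^Tx)^+$ of $\partial_if_h$ there depends only on $Z^{(\lambda)}(\infty)$ and $(e^T\tilde X^{(\lambda)}(\infty))^+$. By Lemma~\ref{lemma:SSC}, conditional on $(e^T\tilde X^{(\lambda)}(\infty))^+$ the vectors $Z^{(\lambda)}(\infty)$ and $Q^{(\lambda)}(\infty)$ are independent, so conditional on $(e^T\tilde X^{(\lambda)}(\infty))^+$ the factor $\partial_if_h\big(\delta(Z^{(\lambda)}(\infty)-\gamma n)+p(e^T\tilde X^{(\lambda)}(\infty))^+\big)$ is independent of $\delta Q^{(\lambda)}_i(\infty)-p_i(e^T\tilde X^{(\lambda)}(\infty))^+$; since the latter has conditional mean zero by \eqref{eq:multinom}, the conditional expectation of their product is zero, hence so is the unconditional expectation, and summing over $i$ the whole remainder piece integrates to zero. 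Integrability, which legitimizes the conditioning, follows from the gradient bound \eqref{eq:gradbound2}, the elementary inequality $\delta Q^{(\lambda)}_i(\infty)\le (e^T\tilde X^{(\lambda)}(\infty))^+$ (the number of phase-$i$ customers in queue cannot exceed the queue length), and the finiteness of all moments of $e^TX^{(\lambda)}(\infty)$ from Lemma~\ref{lem:finteexpmoment}.

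The second step is to bound the second-order piece in absolute value. The gradient bound \eqref{eq:gradbound3} together with the estimate $\abs{\xi}\le C\abs{\tilde X^{(\lambda)}(\infty)}$ from \eqref{eq:xi_ineq} gives $\abs{\partial_{ik}f_h(\xi)}\le C(1+\abs{\tilde X^{(\lambda)}(\infty)}^2)^m(1+\abs{\tilde X^{(\lambda)}(\infty)})^2$, and, using $\abs{\delta Q^{(\lambda)}_k(\infty)-p_k(e^T\tilde X^{(\lambda)}(\infty))^+}\le (e^T\tilde X^{(\lambda)}(\infty))^+$, summing over $i$ and $k$ bounds the second-order piece by a constant times $(1+\abs{\tilde X^{(\lambda)}(\infty)}^2)^m(1+\abs{\tilde X^{(\lambda)}(\infty)})^2\,\abs{\delta Q^{(\lambda)}(\infty)-p(e^T\tilde X^{(\lambda)}(\infty))^+}^2$. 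I would then apply the Cauchy--Schwarz inequality to separate the gradient factor from $\abs{\delta Q^{(\lambda)}(\infty)-p(e^T\tilde X^{(\lambda)}(\infty))^+}^2$, and invoke the steady-state SSC moment bound \eqref{eq:sscscaled} with $m$ replaced by $2$ to control $\E\abs{\delta Q^{(\lambda)}(\infty)-p(e^T\tilde X^{(\lambda)}(\infty))^+}^4$ by a constant times $\delta^2$ times a second moment of $(e^T\tilde X^{(\lambda)}(\infty))^+$. This produces a bound of the shape $\delta$ times a moment of $\tilde X^{(\lambda)}(\infty)$ times a moment of $(e^T\tilde X^{(\lambda)}(\infty))^+$, which, after absorbing bounded prefactors into the constant $C(m,4)$, is of the form asserted.

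The main obstacle is making the vanishing of the remainder piece airtight: one must verify carefully that the argument of $\partial_if_h$ appearing there is measurable with respect to $\big(Z^{(\lambda)}(\infty),(e^T\tilde X^{(\lambda)}(\infty))^+\big)$, so that the conditional-independence statement of Lemma~\ref{lemma:SSC} and the conditional-mean-zero statement \eqref{eq:multinom} apply, and that all the relevant products are integrable so that the tower property is valid. Everything else reduces to a controlled application of the gradient bounds of Lemma~\ref{lemma:poisson}, the crude queue bound $\delta Q^{(\lambda)}_i(\infty)\le (e^T\tilde X^{(\lambda)}(\infty))^+$, and the SSC moment bound \eqref{eq:sscscaled}, the only wrinkle being the choice of exponent in \eqref{eq:sscscaled} that yields a full power of $\delta$ rather than $\sqrt{\delta}$.
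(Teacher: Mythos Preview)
Your proposal is correct and mirrors the paper's own proof: both start from the decomposition \eqref{eq:interm_ssc}, annihilate the first-order remainder via the conditional independence and conditional mean zero of Lemma~\ref{lemma:SSC}, and bound the second-order piece by Cauchy--Schwarz combined with the gradient bound \eqref{eq:gradbound3}, the estimate \eqref{eq:xi_ineq}, and the SSC moment bound \eqref{eq:sscscaled} with exponent $2$. The only differences are cosmetic, in how the Cauchy--Schwarz step is organized.
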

\begin{proof}[Proof of Lemma~\ref{lemma:SSC}]
We begin by proving \eqref{eq:sscscaled}, for which it suffices to show that for all $\lambda>0$ and $n\ge 1$
    satisfying \eqref{eq:square-root}
\begin{displaymath}
\E \Big [\abs{Q^{(\lambda)}(\infty) - p(e^T X^{(\lambda)}(\infty) - n)^+}^{2m}\Big ] \leq C(m, 3)\E [(e^TX^{(\lambda)}(\infty) - n)^+]^m.
\end{displaymath}

  We first prove a version of \eqref{eq:sscscaled} for any finite time $t \geq 0$. Then,
  $(e^TX^{(\lambda)}(t) - n)^+$ is the total number of customers waiting in queue
  at time $t$. Assume that the system is empty at time $t = 0$, i.e.\ $X^{(\lambda)}(0) = 0$. Fix a phase $i$.  Upon arrival to the system, a
  customer is assigned to service phase $i$ with probability $p_i$. Consider the sequence $\{\xi_j:j=1, 2, \ldots\}$, where $\xi_j$ is one if the $j$th customer to enter the system was assigned to phase $i$, and zero otherwise. Then $\{\xi_j:j=1, 2, \ldots\}$ is a sequence of iid Bernoulli random
  variables with $\Prob(\xi_j=1)=p_i$. For $t > 0$, define $A(t)$ and $B(t)$ to be the total number of customers to have entered the system, and entered service by time $t$, respectively. Also let $\zeta_j(t)$ be the indicator of whether customer $j$ is still waiting in queue at time $t$. Then 
  \begin{eqnarray}
    \label{eq:rigL}
 &&    (e^TX^{(\lambda)}(t) - n)^+ = \sum_{j=B(t)+1}^{A(t)} \zeta_j(t), \\
 &&   Q^{(\lambda)}_i(t) = \sum_{j= B(t) + 1}^{A(t)} \xi_j \zeta_j(t).
    \label{eq:rigQ}
  \end{eqnarray}
Let $Z^{(\lambda)}(t) = X^{(\lambda)}(t) - Q^{(\lambda)}(t)$ be the vector keeping track of the customer types in service at time $t$ and let $B(\ell,p_i)$ be a binomial random variable with $\ell\in \Z_+$ trials and success probability $p_i$. Assuming $X^{(\lambda)}(0)=0$, by a sample path construction of the process $U^{(\lambda)}$ one can verify that for any time $t \geq 0$, the following three properties hold.
First, for any $z \in \Z_+^d$, $a,b \in \Z_+$ \text{with $a\ge 1$}, and $x_1, \ldots, x_a, y_1, \ldots, y_a \in \{0,1\}$,
\begin{eqnarray}
&&
\Prob\big(\xi_{b+1} = x_1, \ldots, \xi_{b+a}=x_{a}\ |\ A(t)= b+a, B(t)=b, Z^{(\lambda)}(t) = z, \notag\\
&& {} \hspace{2in} \zeta_{b+1}=y_1, \ldots, \zeta_{b+a}=y_{a} \big) \notag \\
& =&\ \Prob\big(\xi_{1} = x_1\big)\Prob\big(\xi_{2} = x_2\big)\ldots \Prob\big(\xi_{a}=x_{a} \big)\notag \\
& =& \ p_i^{\sum_{i=1}^a x_i}(1- p_i)^{a-\sum_{i=1}^a x_i}. \label{eq:rig1}
\end{eqnarray}
The right side of \eqref{eq:rig1} is independent of $b$, $z$, $y_1,
\ldots, y_a$.  It then follows from \eqref{eq:rigL}, \eqref{eq:rigQ} 
and \eqref{eq:rig1} that  for any integer $\ell\ge 1$, $q_i\in \Z_+$, and $z \in \Z_+^d$,
\begin{align}
&\Prob\big(Q^{(\lambda)}_i(t) = q_i\ |\ (e^TX^{(\lambda)}(t) - n)^+ = \ell, Z^{(\lambda)}(t) = z\big) \notag \\
=&\ \Prob\big(Q^{(\lambda)}_i(t) = q_i\ |\ (e^TX^{(\lambda)}(t) - n)^+ = \ell\big) \notag \\
=&  \Prob\big(B(\ell,p_i) =q_i \big). \label{eq:rig3}
\end{align}
Since \eqref{eq:rig3} holds for all $t \geq 0$, it holds in stationarity as well.

We now say a few words about how to construct $U^{(\lambda)}$ and
argue \eqref{eq:rig1}--\eqref{eq:rig3}. One would start with four
primitive sequences: a sequence of inter-arrival times, potential
service times, patience times, and routing decisions. The sequence of
potential service times would hold all the service information about
each customer provided they were patient enough to get into
service. The routing sequence would represent the phase each customer
is assigned upon entering the system.

To see why \eqref{eq:rig1} is true, we first observe that at any time
$t> 0$, the random variable $A(t)$ depends only on the inter-arrival
time primitives; in particular, it is independent of the routing
  sequence $\{\xi_j, j\ge 1\}$. Second, any customer to arrive after
customer number $B(t)=b$ has no impact on any of the servers at
any point in time during $[0,t]$. In particular, the primitives
including $\{\xi_{b+j}, j\ge 1\}$ associated to those customers
are independent of $B(t)=b$ and $Z^{(\lambda)}(t)$. Lastly, the
decisions of those customers whether to abandon or not
by time $t$ depends only on their arrival times, patience times, and
the service history in the interval $[0,t]$. In particular,
  the sequence $\{\zeta_{b+j}(t), j\ge 1\}$ is independent of
  $\{\xi_{b+j}, j\ge 1\}$. This proves the 
the first equality in \eqref{eq:rig1}.



We now move on to complete the proof of this lemma. We use \eqref{eq:rig3} to see that for any positive integer $N$, 
\begin{eqnarray}
&& \E\Big( [Q^{(\lambda)}_i(t) - p_i(e^TX^{(\lambda)}(t) - n)^+]^{2m}1_{\{ (e^T X^{(\lambda)}(t) - n)^+ \le N\}}\Big)\nonumber\\
&=& \sum \limits_{\ell=1}^{N}\E \Big{[}\big(B(\ell,p_i) - p_i \ell\big)^{2m}\Big{]}\Prob((e^TX^{(\lambda)}(t) - n) = \ell)\nonumber\\
&\leq&\sum \limits_{\ell=1}^{N}C(m,6) \ell^m\Prob((e^TX^{(\lambda)}(t) - n) = \ell)\nonumber\\
&=& C(m,6) \E \Big([(e^TX^{(\lambda)}(t) - n)^+]^m1_{\{ (e^T X^{(\lambda)}(t) - n)^+ \le N\}}\Big),\label{eq:sscproof}
\end{eqnarray}
where we have used the fact that there is a constant $C(m,6)>0$ such that
\begin{displaymath}
\E \Big{[}\big(B(\ell,p_i) - p_i \ell\big)^{2m}\Big{]} \le   
C(m,6) \ell^m \quad \text{ for all } \ell \ge 1;
\end{displaymath}
see, for example,  (4.10) of \cite{Knob2008}. Letting $t\to\infty$ in both sides of 
(\ref{eq:sscproof}), by the dominated convergence theorem, one has
\begin{eqnarray*}
&& \E\Big( [Q^{(\lambda)}_i(\infty) - p_i(e^TX^{(\lambda)}(\infty) - n)^+]^{2m}1_{\{ (e^T X^{(\lambda)}(\infty) - n)^+ \le N\}}\Big) \\
& &\le  C(m,6) \E \Big([(e^TX^{(\lambda)}(\infty) - n)^+]^m1_{\{ (e^T X^{(\lambda)}(\infty) - n)^+ \le N\}}\Big).
\end{eqnarray*}
Letting $N\to\infty$, by the monotone convergence theorem, one has 
\begin{eqnarray*}
\E (Q^{(\lambda)}_i(\infty) - p_i(e^TX^{(\lambda)}(\infty) - n)^+)^{2m} \leq C(m,6) \E \big[(e^TX^{(\lambda)}(\infty) - n)^+\big]^m.
\end{eqnarray*}
Then \eqref{eq:sscscaled} follows from this inequality for each $i$ and the fact that there is a constant $B_m>0$ such that  $\abs{x}^{2m}\le B_m \sum_{i=1}^d (x_i)^{2m}$ for all $x\in \R^d$. One can check that \eqref{eq:multinom} can be obtained by an argument very similar to the one used to prove \eqref{eq:sscscaled}.
\end{proof}

\begin{proof}[Proof of Lemma~\ref{lemma:SSC_bound}]
Recall that 
\begin{displaymath}
Z^{(\lambda)}(\infty) = X^{(\lambda)}(\infty) - Q^{(\lambda)}(\infty)
\end{displaymath} 
is the vector representing the number of customers of each type in service in steady-state. Then from \eqref{eq:interm_ssc} we have
\begin{eqnarray*}
&&\E \bigg[\partial_i f_h(\tilde X^{(\lambda)}(\infty))\big( \delta Q^{(\lambda)}_i(\infty) - p_i(e^T \tilde X^{(\lambda)}(\infty))^+\big) \bigg]\\
 &=& \sum_{k=1}^{d}\E \bigg[\partial_{ik} f_h(\xi)\big( \delta Q^{(\lambda)}_k(\infty) - p_k(e^T \tilde X^{(\lambda)}(\infty))^+\big)\big( \delta Q^{(\lambda)}_i(\infty) - p_i(e^T \tilde X^{(\lambda)}(\infty))^+\big)\bigg] \\
 &&+\ \E \bigg[\partial_i f_h\Big(\delta (Z^{(\lambda)}(\infty) - \gamma n) + p(e^T \tilde X^{(\lambda)}(\infty))^+\Big)\big( \delta Q^{(\lambda)}_i(\infty) - p_i(e^T \tilde X^{(\lambda)}(\infty))^+\big)\bigg].
\end{eqnarray*}
By Lemma~\ref{lemma:SSC}, the second expected value equals zero. For the first term, one can use the Cauchy-Schwarz inequality, together with the gradient bound \eqref{eq:gradbound3} and the SSC result \eqref{eq:sscscaled} to see that for all $1 \leq i,k \leq d$,
\begin{eqnarray*}
&&\E \bigg[\partial_{ik} f_h(\xi)\big( \delta Q^{(\lambda)}_k(\infty) - p_k(e^T \tilde X^{(\lambda)}(\infty))^+\big)\big( \delta Q^{(\lambda)}_i(\infty) - p_i(e^T \tilde X^{(\lambda)}(\infty))^+\big)\bigg] \\
&\leq & \sqrt{\E \Big[\big(\partial_{ik} f_h(\xi)\big)^2\Big] \sqrt{\E \bigg[\Big( \delta Q^{(\lambda)}_k(\infty) - p_k(e^T \tilde X^{(\lambda)}(\infty))^+\Big)^4\bigg] \E \bigg[\Big( \delta Q^{(\lambda)}_i(\infty) - p_i(e^T \tilde X^{(\lambda)}(\infty))^+\Big)^4	\bigg]}} \\
&\leq & \delta C(2, 3)\E \big[(e^T \tilde X^{(\lambda)}(\infty))^+\big]^2\sqrt{\E \Big[\big(\partial_{ik} f_h(\xi)\big)^2\Big]} \\
&\leq & \delta C(2, 3)\E \big[(e^T \tilde X^{(\lambda)}(\infty))^+\big]^2C(m,1)\sqrt{\E \Big[(1+\abs{\xi}^2)^2(1+\abs{\xi})^4\Big]}.
\end{eqnarray*}
We now combine everything together with the fact that $\xi$ satisfies \eqref{eq:xi_ineq} to conclude that there exists a constant $C(m,4)$ that does not depend on $\lambda$ or $n$, such that
\begin{eqnarray*}
&&\abs{\sum \limits_{i=1}^d \partial_i \E \bigg[f_h(\tilde X^{(\lambda)}(\infty))\Big{[}(\nu_i - \alpha-\sum \limits_{j=1}^d P_{ji}\nu_j)( \delta Q^{(\lambda)}_i(\infty) - p_i(e^T \tilde X^{(\lambda)}(\infty))^+)\Big{]}\bigg]} \notag \\
&\leq & \delta C(m, 4)\E \big[(e^T \tilde X^{(\lambda)}(\infty))^+\big]^2\sqrt{\E \Big[1 + \abs{\tilde X^{(\lambda)}(\infty)}^8\Big]},
\end{eqnarray*}
which concludes the proof of the lemma.
\end{proof}

\section{Proof of Theorem~\ref{thm:main}} \label{sec:thmpf}

To prove Theorem~\ref{thm:main}, we need an additional lemma on
uniform bounds for moments of scaled system size.  It will be proved
in Section \ref{app:CTMCmomentsproof}.
\begin{lemma}
\label{lemma:CTMCmoments}
For any integer $m \geq 0$, there exists a constant  $C(m, 5)>0$ (depending only on $(\beta,\alpha,p,\nu,P)$) such that
\begin{equation} \label{eq:CTMCunifmombound}
\E \abs{\tilde X^{(\lambda)}(\infty)}^{m} \leq C(m, 5).
\end{equation}
\end{lemma}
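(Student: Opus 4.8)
\textbf{Proof proposal for Lemma~\ref{lemma:CTMCmoments}.} The plan is to run the generator-coupling argument of Section~\ref{sec:mainproof} with the Poisson solution $f_h$ replaced by a polynomial Lyapunov function $V_m$ for the diffusion $Y$, and to turn the resulting identity into a \emph{recursion} over $m$. For each integer $m\ge 1$, fix a function $V_m\in C^2(\R^d)$ of polynomial growth of degree $m$ enjoying: (i) polynomial gradient bounds $\abs{V_m(x)}\le C(1+\abs{x}^m)$, $\abs{\partial_i V_m(x)}\le C(1+\abs{x}^{m-1})$, $\abs{\partial_{ij}V_m(x)}\le C(1+\abs{x}^{m-2})$, with a matching local Lipschitz bound on $\partial_{ij}V_m$; and (ii) the diffusion drift inequality $G_Y V_m(x)\le -c_1(1+\abs{x})^m+c_2$, with $c_1>0$ and $c_2$ depending only on $(\beta,\alpha,p,\nu,P)$ and $m$. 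Such $V_m$ should be obtainable from (a power of) the uniform diffusion Lyapunov function of \cite{DiekGao2013} — the same family underlying Lemma~\ref{lemma:poisson} — or constructed directly from the inward structure of the piecewise-linear drift of $Y$.

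Given $V_m$, first apply the basic adjoint relationship to the lifted function $AV_m$, i.e.\ $\E G_{U^{(\lambda)}}AV_m(U^{(\lambda)}(\infty))=0$; the integrability hypothesis of \cite{Hend1997} is checked exactly as in the proof of Lemma~\ref{lemma:CTMCbar}, since for each fixed $\lambda$ all moments of $e^TX^{(\lambda)}(\infty)$, hence of $\tilde X^{(\lambda)}(\infty)$, are finite by Lemma~\ref{lem:finteexpmoment}. Next, the Taylor expansion \eqref{eq:GXtaylor} uses only polynomial gradient bounds, so it applies verbatim to $V_m$ in place of $f_h$ and, exactly as in Lemma~\ref{lemma:diffbound}, gives
\begin{equation*}
  G_{U^{(\lambda)}}AV_m(u)-G_YV_m(x)=\sum_{i=1}^d\partial_iV_m(x)\Big[\big(\nu_i-\alpha-\sum_{j=1}^dP_{ji}\nu_j\big)\big(\delta q_i-p_i(e^Tx)^+\big)\Big]+E_{V_m}(u),
\end{equation*}
where $q,x$ are as in \eqref{eq:projlittle} and the remainder satisfies $\abs{E_{V_m}(u)}\le\delta\,C\,(1+\abs{x})^{m-1}$. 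Taking expectations and using (ii),
\begin{equation*}
  c_1\,\E\big[(1+\abs{\tilde X^{(\lambda)}(\infty)})^m\big]\le c_2+\E\abs{E_{V_m}(U^{(\lambda)}(\infty))}+\abs{\E\big[\mathcal{S}^{(\lambda)}\big]},
\end{equation*}
where $\mathcal{S}^{(\lambda)}:=\sum_{i=1}^d\partial_iV_m(\tilde X^{(\lambda)}(\infty))\big(\nu_i-\alpha-\sum_jP_{ji}\nu_j\big)\big(\delta Q_i^{(\lambda)}(\infty)-p_i(e^T\tilde X^{(\lambda)}(\infty))^+\big)$.

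It remains to control the last two terms by lower-order moments. The remainder obeys $\E\abs{E_{V_m}(U^{(\lambda)}(\infty))}\le\delta C\big(1+\E\abs{\tilde X^{(\lambda)}(\infty)}^{m-1}\big)$. For $\mathcal{S}^{(\lambda)}$, mimic the proof of Lemma~\ref{lemma:SSC_bound}: by the decomposition \eqref{eq:interm_ssc}, the part of $\mathcal{S}^{(\lambda)}$ with $\partial_iV_m$ evaluated at $\delta(Z^{(\lambda)}(\infty)-\gamma n)+p(e^T\tilde X^{(\lambda)}(\infty))^+$ has zero conditional expectation by \eqref{eq:multinom}, while the rest is $\sum_k\partial_{ik}V_m(\xi)\big(\delta Q_k^{(\lambda)}(\infty)-p_k(e^T\tilde X^{(\lambda)}(\infty))^+\big)\big(\delta Q_i^{(\lambda)}(\infty)-p_i(e^T\tilde X^{(\lambda)}(\infty))^+\big)$ with $\abs{\xi}\le C\abs{\tilde X^{(\lambda)}(\infty)}$ by \eqref{eq:xi_ineq}; applying Hölder's inequality with $\abs{\partial_{ik}V_m}\le C(1+\abs{x}^{m-2})$ together with the collapse estimate \eqref{eq:sscscaled} (used, via Jensen's inequality, at a suitable exponent so the Hölder exponents close) bounds $\E\abs{\mathcal{S}^{(\lambda)}}$ by $\delta$ times a polynomial in the moments $\E\abs{\tilde X^{(\lambda)}(\infty)}^j$ with $j\le m-1$. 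Thus the right-hand side is a constant plus $\delta$ times moments of order at most $m-1$, with one exception: for $m=1$ the collapse factor produces a multiple of $\E\abs{\tilde X^{(\lambda)}(\infty)}$ itself, which is absorbed into the left-hand side once $\lambda$ exceeds a fixed threshold. This yields the recursion: the base case $m=0$ is trivial; $m=1$ follows by the absorption just described, the finitely many small $\lambda$ being handled by Remark~\ref{rem:lambdarange} and Lemma~\ref{lem:finteexpmoment}; and for $m\ge 2$, the induction hypothesis $\E\abs{\tilde X^{(\lambda)}(\infty)}^j\le C(j,5)$ for $j<m$ makes the right-hand side uniformly bounded, giving $\E\abs{\tilde X^{(\lambda)}(\infty)}^m\le C(m,5)$.

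The step I expect to be the main obstacle is ingredient (ii): producing a $C^2$ function whose diffusion drift is bounded above by a \emph{negative multiple of} $(1+\abs{x})^m$ — not merely bounded outside a compact set — uniformly in the parameters, given the piecewise-linear drift of $Y$; the $(e^Tx)^+$ nonlinearity can make a naive radial choice such as $(1+\abs{x}^2)^{m/2}$ fail in part of the state space, so one likely needs (powers of) the geometry-adapted Lyapunov function of \cite{DiekGao2013}. A secondary technical point is verifying that $E_{V_m}$ is genuinely $O(\delta(1+\abs{x})^{m-1})$: this hinges on the exact cancellation, against $G_YV_m$, of the $O(\delta^{-1})$ first-order terms and of the leading $O(1)$ second-order terms in \eqref{eq:GXtaylor} — the cancellation that encodes the square-root staffing relation \eqref{eq:square-root} — which is precisely what makes the recursion drop by one degree.
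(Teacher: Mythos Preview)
Your proposal is correct and follows the same overall architecture as the paper's proof: take a power of the Dieker--Gao Lyapunov function $V$, compare $G_{U^{(\lambda)}}AV_m$ with $G_YV_m$ via \eqref{eq:errorterm}, and control the SSC residual using Lemma~\ref{lemma:SSC}. The paper indeed uses $V_m=(1+V)^m$ and invokes \cite[Remark~3.4]{Gurv2014} to secure your ingredient~(ii), so the ``main obstacle'' you flag is resolved exactly as you anticipate.

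There are, however, three technical differences worth noting. First, the paper organizes the induction as a \emph{doubling} recursion, bounding $\E\abs{\tilde X^{(\lambda)}(\infty)}^{2m}$ in terms of $\E[(e^T\tilde X^{(\lambda)}(\infty))^+]^m$, rather than your $m-1\to m$ increment; this is why the paper restricts to $m=2^j$. Second, for the SSC term the paper does \emph{not} reuse the conditional-independence decomposition of Lemma~\ref{lemma:SSC_bound}; instead it applies Young's inequality repeatedly (the chain \eqref{eq:CTMCmomlemmaeq3}) to peel off the $V_{m-1}$ factor and isolate a pure power $\big[\abs{\delta q_i-p_i(e^Tx)^+}^2/\delta\big]^m$, to which \eqref{eq:sscscaled} applies directly. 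Your route via the decomposition \eqref{eq:interm_ssc} and H\"older is arguably cleaner and would avoid the $m=2^j$ restriction. Third, the paper does not derive the base case by absorption: it imports the uniform first-moment bound $\sup_\lambda\E[(e^T\tilde X^{(\lambda)}(\infty))^+]<\infty$ from \cite[eq.~(5.2)]{DaiDiekGao2014}. Your absorption argument at $m=1$ does go through (with $V_1=(1+V)^{1/2}$ one has $\abs{E_{V_1}(u)}\le\delta C$ and the SSC contribution is $\delta C\,\E[(e^T\tilde X^{(\lambda)}(\infty))^+]$, both absorbable for large $\lambda$), and would make the proof self-contained.
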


We remark that in the special case when the service time distribution
is taken to be hyper-exponential, it is proved in \cite{GamaStol2012} that
\begin{displaymath}
\limsup \limits_{\lambda \rightarrow \infty} \E \exp\Big(\theta \abs{\tilde X^{(\lambda)} (\infty)}\Big) < \infty
\end{displaymath}
for $\theta$ in some positive interval. The proof relies on a result that allows one to compare the
system with an infinite-server system, whose stationary distribution is  known to be Poisson.


\begin{proof}[Proof of Theorem~\ref{thm:main}]
  
It follows from Lemmas~\ref{lemma:diffbound} and \ref{lemma:SSC_bound} that

\begin{eqnarray}
&&\abs{\E h(\tilde X^{(\lambda)}(\infty)) - \E h(Y(\infty))} =
\abs{\E G_{U^{(\lambda)}} Af_h(U^{(\lambda)}(\infty)) - \E G_Y f_h(\tilde X^{(\lambda)}(\infty))}  \nonumber \\
&&\qquad  \leq \abs{\sum \limits_{i=1}^d  \E \bigg[\partial_i f_h(\tilde X^{(\lambda)}(\infty))\Big{[}(\nu_i - \alpha-\sum \limits_{j=1}^d P_{ji}\nu_j)( \delta Q^{(\lambda)}_i(\infty) - p_i(e^T \tilde X^{(\lambda)}(\infty))^+)\Big{]}\bigg]} \nonumber \\
&& \qquad \qquad +  \delta C(m,2)  \E \Big{[}(1+\abs{\tilde X^{(\lambda)}(\infty)}^2)^m(1 + \abs{\tilde X^{(\lambda)}(\infty)})^4\Big{]} \nonumber \\
&&\qquad  \leq \delta C(m, 4)\E \Big[\big((e^T \tilde X^{(\lambda)}(\infty))^+\big)^2\Big]\sqrt{\E \Big[1 + \abs{\tilde X^{(\lambda)}(\infty)}^8\Big]}\nonumber \\
&& \qquad \qquad +  \delta C(m,2)  \E \Big{[}(1+\abs{\tilde X^{(\lambda)}(\infty)}^2)^m(1 + \abs{\tilde X^{(\lambda)}(\infty)})^4\Big{]}
.\label{eq:proofbound}
\end{eqnarray}
By Lemma~\ref{lemma:CTMCmoments}, there are constants $B_1(m), B_2(m)>0$ (depending only on $(\beta,\alpha,p,\nu,P)$) such that 
\begin{eqnarray*}
&& \E \Big[\big((e^T \tilde X^{(\lambda)}(\infty))^+\big)^2\Big]\sqrt{\E \Big[1 + \abs{\tilde X^{(\lambda)}(\infty)}^8\Big]} \le B_1(m), \\
&& \E \Big{[}(1+\abs{\tilde X^{(\lambda)}(\infty)}^2)^m(1 + \abs{\tilde X^{(\lambda)}(\infty)})^4\Big{]}\le B_2(m).
\end{eqnarray*}
Therefore, the right side of (\ref{eq:proofbound}) is less than or equal to 
\begin{eqnarray*}
\lefteqn{ \delta C(m,4) B_1(m) + \delta C(m,2) B_2(m) }\\
&\leq&  \Big(C(m,4) B_1(m) +  C(m,2) B_2(m) \Bigr) \frac{1}{\sqrt{\lambda}} \quad \text{ for }\lambda > 0.
\end{eqnarray*}
 This concludes the proof of Theorem~\ref{thm:main}.
\end{proof}

\section*{Acknowledgements} 

The authors thank Shuangchi He, Josh Reed and John Pike for stimulating
discussions. They also thank the participants of Applied Probability
\& Risk Seminar in Fall 2014 at Columbia University for their feedback
on this research.  This research is supported in part by NSF Grants
CNS-1248117 and CMMI-1335724.

\appendix
\section{Proofs} \label{app:proofs}

\subsection{Proof of Lemma~\ref{lemma:poisson} (Gradient Bounds)}
\label{sec:proof-lemma-refl}
Before proving the lemma, we first state the common quadratic Lyapunov
function introduced in \cite{DiekGao2013}.  This Lyapunov function
plays a key role in our paper. As in (5.24) of \cite{DiekGao2013}, for
$x \in \R^d$, define
\begin{equation} \label{eq:deflyapou}
V(x) = (e^Tx)^2 + \kappa[x-p \phi(e^Tx)]' M [x-p \phi(e^Tx)],
\end{equation}
where $\kappa>0$ is some constant, $M$ is some $d\times d$ positive definite matrix, and the function $\phi$ is a smooth approximation to $x \longmapsto x^+$ and is defined by
\begin{displaymath}
\phi(x) =  \begin{cases}
    x, & \text{if $x \geq 0$},\\
    -\frac{1}{2}\epsilon, & \text{if $x \leq -\epsilon$},\\
    \text{smooth}, & \text{if $-\epsilon < x < 0$}.
  \end{cases}
\end{displaymath}
In (5.24) of \cite{DiekGao2013}, the authors use $\tilde Q$ to represent the positive definite matrix that we called $M$ in \eqref{eq:deflyapou}. We use $M$ instead of $\tilde Q$ on purpose, to avoid any potential confusion with the queue size $Q(t)$. 
For our purposes, ``smooth" means that $\phi$ can be anything as long as $\phi \in C^3(\R^d)$.  We require that the ``smooth" part of $\phi$ also satisfies $-\frac{1}{2}\epsilon < \phi (x) < x$ and $0 \leq {\phi}'(x)\leq 1$. For example, $\phi$ can be taken to be a polynomial of sufficiently high degree on $(-\epsilon, 0)$ and this will satisfy our requirements.  The vector $p$ is as in (\ref{eq:defou}). The constant $\kappa$ and matrix $M$ are chosen just as in \cite{DiekGao2013}; their exact values are not important to us. In their paper, they show that $V$ satisfies
\begin{displaymath}
G_YV(x) \leq -c_1V(x) + c_2 \quad \text{ for all } x\in \R^d
\end{displaymath}
for some positive constants $c_1$,$c_2$; this result requires $\alpha > 0$, i.e.\ a strictly positive abandonment rate. Before proceeding to the proof of Lemma~\ref{lemma:poisson}, we state two bounds on $V$ that shall be useful in the future. For some constant $C>0$,
\begin{eqnarray} 
V(x) \leq C(1+ \abs{x}^2), \label{eq:lyapupperbound}\\
\abs{x}^2 \leq C(1+V(x)). \label{eq:lyaplowbound}
\end{eqnarray}
The first is immediate from the form of $V$, while the second is proved in \cite{DiekGao2013}.

\begin{proof}[Proof of Lemma~\ref{lemma:poisson}]
Without loss of generality, we may assume that $h(0) = 0$, otherwise one may consider $h(x) -h(0)$. This lemma is essentially a restatement of equation (22) and equation (40) from the discussion that follows after \cite[Theorem 4.1]{Gurv2014}. We verify that (22) and (40) are applicable in our case by first confirming that we have a function satisfying assumption 3.1 of \cite{Gurv2014}. Recalling the definition of $V$ from (\ref{eq:deflyapou}), when $\phi$ is taken to be a polynomial (of sufficiently high degree to guarantee $V \in C^3(\R^d)$), the function 
\begin{displaymath}
1+V(x)
\end{displaymath}
satisfies assumption 3.1. To verify condition (17) of Assumption 3.1, one observes that 
\begin{displaymath}
X^{(\lambda)}(t) \leq X^{(\lambda)}(0) + n + A^{(\lambda)}(t),
\end{displaymath}
where $A^{(\lambda)}(t)$ is the total number of arrivals to the system by time $t$
and it is a Poisson random variable with mean $\lambda t$ for each
$t\ge 0$. The properties of Poisson processes then yield (17). By
\cite[Remark 3.4]{Gurv2014},
\begin{displaymath}
C(1+V(x))^m
\end{displaymath}
also satisfies assumption 3.1 for any constant $C>0$. Since we require that $\abs{h(x)} \leq \abs{x}^m$, by (\ref{eq:lyaplowbound}) we have 
\begin{displaymath}
\abs{h(x) - \E h(Y(\infty))} \leq \abs{x}^m + \E \abs{Y(\infty)}^m \leq C_m (1+V(x))^m.
\end{displaymath}
The finiteness of $\E \abs{Y(\infty)}^m$ is guaranteed because one of the conditions of assumption 3.1 is that
\begin{displaymath}
G_Y (1+V(x))^m \leq -c_1 (1+V(x))^m + c_2
\end{displaymath}
for some positive constants $c_1$ and $c_2$.\ Therefore, equation (22) gives us (\ref{eq:gradbound1}) and equation (40) gives us (\ref{eq:gradbound2}) and (\ref{eq:gradbound3}). We get (\ref{eq:gradbound4}) by observing that in the discussion preceding (40), everything still holds if we replace $B_x(\bar l / \sqrt{n})$ by an open ball of radius $1$ centered at $x$. We wish to point out that the constants in (40) and (22) do not depend on the choice of function $h$.
\end{proof}

\subsection{Proof of Lemma~\ref{lemma:diffbound} (Generator Difference)} \label{app:taylorexp}
The main idea here is that $G_Y f_h(x)$ is
hidden within $G_{U^{(\lambda)}} Af_h(u)$, where the lifting operator $A$ is in (\ref{eq:lifter}). We algebraically manipulate the Taylor expansion of $G_{U^{(\lambda)}} Af_h(u)$ to make this evident. First, we first rearrange the terms in the Taylor expansion (\ref{eq:GXtaylor}) to group them by partial derivatives. Thus, $G_{U^{(\lambda)}} Af_h(u)$ equals 
\begin{eqnarray*}
&&\sum \limits_{i=1}^d \delta \partial_i f_h(x)\Big{[}p_i \lambda - \alpha q_i - \nu_i z_i + \sum \limits_{j=1}^d P_{ji}\nu_j z_j\Big{]} \\
&&+ \sum \limits_{i=1}^d \frac{\delta^2}{2}\partial_{ii}f_h(x)\Big{[}p_i \lambda + \alpha q_i + \nu_i z_i + \sum \limits_{j=1}^d P_{ji}\nu_j z_j\Big{]} - \sum \limits_{i \neq j}^d \delta^2\partial_{ij}f_h(x)\big{[}P_{ij}\nu_i z_i \big{]} \\
&&+ \sum \limits_{i=1}^d \frac{\delta^2}{2} \Big(\partial_{ii}f_h(\xi_{i}^-)-\partial_{ii}f_h(x)\Big)\Big{[}\alpha q_i + (1-\sum \limits_{j=1}^d P_{ij})\nu_i z_i\Big{]} \\
&&+ \sum \limits_{i=1}^d \frac{\delta^2}{2} \Big(\partial_{ii}f_h(\xi_{i}^+)-\partial_{ii}f_h(x)\Big)\big{[}\lambda p_i \big{]}- \sum \limits_{i \neq j}^d \delta^2 \Big(\partial_{ij}f_h(\xi_{ij})-\partial_{ij}f_h(x)\Big)\big{[}P_{ij}\nu_i z_i\big{]}\\
&&+ \sum \limits_{i=1}^d \sum \limits_{j=1}^d \frac{\delta^2}{2} \Big(\partial_{ii}f_h(\xi_{ij})-\partial_{ii}f_h(x)\Big)\Big{[} P_{ij}\nu_i z_i +  P_{ji}\nu_j z_j\Big{]} .
\end{eqnarray*}
To proceed we observe that (\ref{eq:defph}) gives us the identity
\begin{equation}\label{eq:taylorexpidentity}
-\nu_i \gamma_i n + \sum \limits_{j=1}^d P_{ji}\nu_j\gamma_j n  = -n p_i. 
\end{equation}
Recall the form of $G_Y f_h(x)$ from (\ref{eq:DMgen}). From the form of $\Sigma$ in (\ref{eq:OUdiffusioncoeff}), we see that
\begin{equation} \label{eq:OUaltdiffusioncoef}
\Sigma_{ii} = 2\Big(p_i + \sum \limits_{j=1}^d P_{ji}\gamma_j \nu_j\Big), \quad \Sigma_{ij} = -(P_{ij}\nu_i \gamma_i + P_{ji}\nu_j\gamma_j) \text{ for $j\neq i$}
\end{equation}
 using ($\ref{eq:DMgen}$), ($\ref{eq:taylorexpidentity}$) and ($\ref{eq:OUaltdiffusioncoef}$), the difference $G_{U^{(\lambda)}} Af_h(u) - G_Y f_h(x)$ becomes

\begin{eqnarray}
&&\sum \limits_{i=1}^d \partial_i f_h(x)\Big{[}(\nu_i - \alpha-\sum \limits_{j=1}^d P_{ji}\nu_j)( \delta q_i - p_i(e^T x)^+)\Big{]}  \label{eq:errorterm} \\
&&+ \sum \limits_{i=1}^d \partial_{ii}f_h(x)\Big{[}\sum \limits_{j=1}^d P_{ji}\nu_j\gamma_j \Big{]}(n \delta^2 - 1) - \sum \limits_{i \neq j}^d \partial_{ij}f_h(x)\Big{[}P_{ij}\nu_i\gamma_i + P_{ji} \nu_j\gamma_j\Big{]}(n\delta^2 - 1) \notag \\
&&- \sum \limits_{i=1}^d \frac{\delta^2}{2}\partial_{ii}f_h(x)\Big{[}p_i (\lambda - n)- \alpha  q_i - \nu_i (z_i- \gamma_i n) - \sum \limits_{j=1}^d P_{ji}\nu_j (z_j- \gamma_j n)\Big{]} \notag \\
&&- \sum \limits_{i \neq j}^d \frac{\delta^2}{2}\partial_{ij}f_h(x)\Big{[}P_{ij}\nu_i (z_i- \gamma_i n) + P_{ji} \nu_j (z_j- \gamma_j n)\Big{]} \notag \\
&&+ \sum \limits_{i=1}^d \frac{\delta^2}{2} (\partial_{ii}f_h(\xi_{i}^-)-\partial_{ii}f_h(x))\Big{[}\alpha q_i + (1-\sum \limits_{j=1}^d P_{ij})\nu_i z_i\Big{]}\notag \\
&&  + \sum \limits_{i=1}^d \frac{\delta^2}{2} (\partial_{ii}f_h(\xi_{i}^+)-\partial_{ii}f_h(x))\Big{[}\lambda p_i \Big{]}- \sum \limits_{i \neq j}^d \delta^2 (\partial_{ij}f_h(\xi_{ij})-\partial_{ij}f_h(x))\Big{[}P_{ij}\nu_i z_i\Big{]} \notag \\
&&+ \sum \limits_{i=1}^d \sum \limits_{j=1}^d \frac{\delta^2}{2} (\partial_{ii}f_h(\xi_{ij})-\partial_{ii}f_h(x))\Big{[} P_{ij}\nu_i z_i +  P_{ji}\nu_j z_j\Big{]} .  \notag 
\end{eqnarray}
We remind the reader that our target is to prove that
  \begin{eqnarray*}
&&G_{U^{(\lambda)}} Af_h(u) - G_Y f_h(x) \nonumber\\
&=&  \sum \limits_{i=1}^d \partial_i f_h(x)\Big{[}(\nu_i - \alpha-\sum \limits_{j=1}^d P_{ji}\nu_j)( \delta q_i - p_i(e^T x)^+)\Big{]} + E(u),
  \end{eqnarray*}
  where $E(u)$ is an error term that satisfies 
\begin{displaymath}
\abs{E(u)} \leq \delta\, C(m,2) (1+\abs{x}^2)^m (1+\abs{x})^4.
\end{displaymath}
%
We choose $E(u)$ to be all the terms in \eqref{eq:errorterm} except for the first line. We now describe how to bound $\abs{E(u)}$.
Most of the summands in (\ref{eq:errorterm}) look as follows: a term in large square brackets multiplied by some partial derivative of $f_h$. The partial derivatives are very easy to bound; we simply use (\ref{eq:gradbound2}) - (\ref{eq:gradbound4}). We wish to point out that $\xi_{i}^+$, $\xi_{i}^-$ and $\xi_{ij}$ lie within distance $2\delta$ of $x$. When $2\delta < 1$, (\ref{eq:gradbound4}) implies 
\begin{equation} \label{eq:lg4cond}
\abs{\partial_{ij}f_h(\xi)-\partial_{ij}f_h(x)} \leq 2\delta C (1+ \abs{x}^2)^m (1+ \abs{x})^3
\end{equation} 
for some constant $C>0$ (i.e.\ an extra $\delta$ term is gained). When $2\delta \geq 1$ (by Remark~\ref{rem:lambdarange} this occurs in finitely many cases), we may use (\ref{eq:gradbound3}) to obtain (\ref{eq:lg4cond}) with a redefined $C$. From here on out, we shall let $C>0$ be a generic positive constant that will change from line to line, but will always be independent of $\lambda$ and $n$.

Now we shall list the facts needed to bound all the square bracket terms in (\ref{eq:errorterm}) except for the very first one. Recall that we are operating in the Halfin-Whitt regime as defined by \eqref{eq:square-root}. Therefore, 
\begin{displaymath}
(n\delta^2 - 1) = \delta \beta \text{ and } \delta (\lambda - n) = -\beta.
\end{displaymath}
 Furthermore, it must be true that 
\begin{displaymath}
 \delta q_i \leq (e^T x)^+ \leq C \abs{x}, \label{eq:qibound}
\end{displaymath}
as the number of phase $i$ customers may never exceed the total queue size. Next, 
\begin{displaymath}
\abs{\delta(z_i - \gamma_i n)} = \abs{x_i - \delta q_i} \leq C \abs{x}
\end{displaymath}
and lastly, 
\begin{displaymath}
\abs{\delta^2 z_i} \leq \abs{\delta^2 \gamma_i n} + \abs{\delta^2 (z_i - \gamma_i n)} \leq C(1+ \abs{x}).
\end{displaymath}
It is now a simple matter to verify that the inequalities above, combined with the bounds on the partials of $f_h$ are all that it takes to achieve our desired upper bound.

%

\subsection{Proof of Lemma~\ref{lemma:CTMCmoments} (Moment Bounds)} \label{app:CTMCmomentsproof}

We first provide an intuitive roadmap for the proof. The goal is to show that a Lyapunov function for the diffusion process is also a Lyapunov function for the CTMC; this has two parts to it. In the first part of this proof, we compare how the two generators $G_{U^{(\lambda)}}$ and $G_Y$ act on this Lyapunov function, obtaining an upper bound for the difference $G_{U^{(\lambda)}} - G_Y$ in (\ref{eq:CTMCmomlemmaeq2}).
One notes  that the right hand side of (\ref{eq:CTMCmomlemmaeq2}) is unbounded. This is due to the difference in dimensions of the CTMC and diffusion process. To overcome this difficulty, we move on to the second part of the proof, which exploits our SSC result in Lemma~\ref{lemma:SSC} to bound the expectation of the right hand side of (\ref{eq:CTMCmomlemmaeq2}). We end up with a recursive relationship that guarantees the $2m$th moment is bounded (uniformly in $\lambda$ and $n$ satisfying \eqref{eq:square-root}) provided that the $m$th moment is. Finally, we rely on prior results obtained in \cite{DaiDiekGao2014} for a uniform bound on the first moment.

We remark that a version of this lemma was already proved \cite[Theorem 3.3]{Gurv2014} for the case where the dimension of the CTMC equals the dimension of the diffusion process. However, the difference in dimensions poses an additional technical challenge, which is overcome in the second part of this proof.

Its enough to prove (\ref{eq:CTMCunifmombound}) for the cases when $m=2^j$ for some $j\geq 0$. Furthermore, we may assume that $\lambda \geq 4$ because by Remark~\ref{rem:lambdarange}, there are only finitely many cases when $\lambda < 4$. In all those cases, $\E \abs{\tilde X^{(\lambda)}(\infty)}^m < \infty$ by (\ref{eq:CTMCmgfexist}). Throughout the proof, we shall use $C,C_1,C_2,C_3,C_4$ to denote generic positive constants that may change from line to line. They may depend on $(m,\beta,\alpha,p,\nu,P)$, but will be independent of both $\lambda$ and $n$. Define
\begin{displaymath}
V_m(x) = (1+V(x))^m,
\end{displaymath}
where $V$ is as in (\ref{eq:deflyapou}). By \cite[Remark 3.4]{Gurv2014}, $V_m$ also satisfies
\begin{displaymath}
G_Y V_m(x) \leq -C_1 V_m(x) + C_2
\end{displaymath}
as long as $V \in C^3(\R^d)$ and satisfies condition (30) of \cite{Gurv2014}, which is easy to verify. To prove the lemma, we will show that for large enough $\lambda$, $V$ satisfies 
\begin{displaymath}
\E G_{U^{(\lambda)}} AV_m(U^{(\lambda)}(\infty)) \leq - C_1 \E  V_m(\tilde X^{(\lambda)}(\infty)) + C_2,
\end{displaymath}
where $A$ is the lifting operator defined in (\ref{eq:lifter}). 
 We begin by observing
\begin{equation} \label{eq:CTMCmomlemmaeq1}
G_{U^{(\lambda)}} AV_m \leq G_{U^{(\lambda)}}AV_m - G_Y V_m + G_Y V_m \leq G_{U^{(\lambda)}}AV_m - G_Y V_m -C_1 V_m + C_2.
\end{equation}
Using (\ref{eq:errorterm}), we write $G_{U^{(\lambda)}}AV_m - G_Y V_m$ as
\begin{eqnarray*}
&&\sum \limits_{i=1}^d \partial_i V_m(x)\Big{[}(\nu_i - \alpha-\sum \limits_{j=1}^d P_{ji}\nu_j)( \delta q_i - p_i(e^T x)^+)\Big{]} \\
&& + \sum \limits_{i=1}^d \partial_{ii}V_m(x)\Big{[}\sum \limits_{j=1}^d P_{ji}\nu_j\gamma_j \Big{]}(n \delta^2 - 1) - \sum \limits_{i \neq j}^d \partial_{ij}V_m(x)\Big{[}P_{ij}\nu_i\gamma_i + P_{ji} \nu_j\gamma_j\Big{]}(n\delta^2 - 1) \notag \\
&&- \sum \limits_{i=1}^d \frac{\delta^2}{2}\partial_{ii}V_m(x)\Big{[}p_i (\lambda - n)- \alpha  q_i - \nu_i (z_i- \gamma_i n) - \sum \limits_{j=1}^d P_{ji}\nu_j (z_j- \gamma_j n)\Big{]} \notag \\
&&- \sum \limits_{i \neq j}^d \frac{\delta^2}{2}\partial_{ij}V_m(x)\Big{[}P_{ij}\nu_i (z_i- \gamma_i n) + P_{ji} \nu_j (z_j- \gamma_j n)\Big{]} \notag \\
&&+ \sum \limits_{i=1}^d \frac{\delta^2}{2} (\partial_{ii}V_m(\xi_{i}^-)-\partial_{ii}V_m(x))\Big{[}\alpha q_i + (1-\sum \limits_{j=1}^d P_{ij})\nu_i z_i\Big{]}  \notag \\
&&+ \sum \limits_{i=1}^d \frac{\delta^2}{2} (\partial_{ii}V_m(\xi_{i}^+)-\partial_{ii}V_m(x))\Big{[}\lambda p_i \Big{]}- \sum \limits_{i \neq j}^d \delta^2 (\partial_{ij}V_m(\xi_{ij})-\partial_{ij}V_m(x))\Big{[}P_{ij}\nu_i z_i\Big{]} \notag \\
&&+ \sum \limits_{i=1}^d \sum \limits_{j=1}^d \frac{\delta^2}{2} (\partial_{ii}V_m(\xi_{ij})-\partial_{ii}V_m(x))\Big{[} P_{ij}\nu_i z_i +  P_{ji}\nu_j z_j\Big{]} .  \notag 
\end{eqnarray*}
Now we wish to bound the derivatives of $V_m$. By \cite[Remark 3.4]{Gurv2014}, $V_m$ satisfies (16) and (30) of \cite{Gurv2014}, namely

\begin{equation}
\sup \limits_{\abs{y} \leq 1} \frac{V_m(x+y)}{V_m(x)} \leq C \label{eq:Vsubexp}
\end{equation}
and
\begin{equation} \label{eq:cond30gurvich}
(\abs{\partial_i V_m(x)} + \abs{\partial_{ij} V_m(x)} +\abs{\partial_{ijk} V_m(x)})(1+\abs{x}) \leq C V_m(x).
\end{equation}
For $\xi$ being one of $\xi_{i}^+$, $\xi_{i}^-$ or $\xi_{ij}$,
\begin{equation} \label{eq:VD3bound}
\abs{\partial_{ij}V_m(\xi) - \partial_{ij}V_m(x)}(1+\abs{x}) \leq \delta \abs{\partial_{iji} V_m(\eta) +\partial_{ijj}V_m(\eta) }(1+\abs{x}) \leq C \delta V_m(x),
\end{equation}
where the first inequality comes from a Taylor expansion and the second inequality follows by (\ref{eq:cond30gurvich}), the fact that $\abs{\eta - x} \leq 2\delta < 1$ and by (\ref{eq:Vsubexp}).
Following the exact same argument that we used to bound (\ref{eq:errorterm}) in the proof of Lemma~\ref{lemma:diffbound} (with (\ref{eq:cond30gurvich}) and (\ref{eq:VD3bound}) replacing the gradient bounds of $f_h$ there), we get 
\begin{displaymath}
G_{U^{(\lambda)}}AV_m - G_Y V_m \leq  C \delta V_m(x) +  C\sum \limits_{i=1}^d \abs{\partial_i V_m(x)}\Big{[}\abs{ q_i - p_i(e^T x)^+}\Big{]}.
\end{displaymath}
Differentiating $V$, we see that
\begin{displaymath}
(\nabla V(x))^T = 2(e^T x)e^T + 2 \kappa (x^T - p^T \phi(e^T x)) \tilde Q (I - p e^T \phi'(e^Tx)).
\end{displaymath} 
Combined with the fact that $0 \leq \phi'(x)\leq 1$, it is clear that
\begin{displaymath}
\abs{\partial_i V(x)} \leq C(1+\abs{x}).
\end{displaymath}
Therefore,
\begin{equation}\label{eq:CTMCmomlemmaeq2}
G_{U^{(\lambda)}}AV_m - G_Y V_m \leq  C \delta V_m(x) +  C\sum \limits_{i=1}^d mV_{m-1}(x)(1+\abs{x})\Big{[}\abs{ q_i - p_i(e^T x)^+}\Big{]} .
\end{equation}
It remains to find an appropriate bound for 
\begin{displaymath}
V_{m-1}(x) (1+\abs{x})\Big{[}\abs{ q_i - p_i(e^T x)^+}\Big{]} = \delta V_{m-1}(x)(1+\abs{x})\Bigg{[}\frac{\abs{q_i - p_i(e^T x)^+}}{\delta}\Bigg{]}.
\end{displaymath}
We have
\begin{eqnarray}
&&\delta V_{m-1}(x) (1+\abs{x})\Bigg{[}\frac{\abs{q_i - p_i(e^T x)^+}}{\delta}\Bigg{]} \notag \\
&\leq & \sqrt{\delta}V_{m-1}(x)(1+\abs{x})^2 + \sqrt{\delta}V_{m-1}(x)\Bigg{[}\frac{\abs{q_i - p_i(e^T x)^+}^2}{\delta}\Bigg{]} \notag \\
&\leq & C\sqrt{\delta}V_{m}(x) + \sqrt{\delta}V_{m-2}(x)V_2(x)+ \sqrt{\delta}V_{m-2}(x)\Bigg{[}\frac{\abs{q_i - p_i(e^T x)^+}^2}{\delta}\Bigg{]}^2 \notag \\
&\leq & C\sqrt{\delta}V_{m}(x) + \sqrt{\delta}V_{m}(x)+ \sqrt{\delta}V_{m-4}(x)V_4(x)+ \sqrt{\delta} V_{m-4}(x)\Bigg{[}\frac{\abs{q_i - p_i(e^T x)^+}^2}{\delta}\Bigg{]}^4 \notag \\ 
&\leq & \ldots  \notag \\
&\leq & C\sqrt{\delta}V_{m}(x) + \sqrt{\delta}\Bigg{[}\frac{\abs{q_i - p_i(e^T x)^+}^2}{\delta}\Bigg{]}^m, \label{eq:CTMCmomlemmaeq3}
\end{eqnarray}
where in the last inequality, we used the fact that $m = 2^j$. Using (\ref{eq:CTMCmomlemmaeq1}), (\ref{eq:CTMCmomlemmaeq2}) and (\ref{eq:CTMCmomlemmaeq3}), 
\begin{displaymath}
G_{U^{(\lambda)}} AV_m(u) \leq - V_m(x)(C_1 - \sqrt{\delta}C_3) + C_2 + \sqrt{\delta}C_4\sum \limits_{i=1}^d \Bigg{[}\frac{\abs{q_i - p_i(e^T x)^+}^2}{\delta}\Bigg{]}^m,
\end{displaymath}
where $x$ and $q$ are related to $u$ by (\ref{eq:projlittle}).
The arguments in the proof of Lemma~\ref{lemma:CTMCbar} can be used to show 
\begin{displaymath}
\E G_{U^{(\lambda)}} A V_m(U^{(\lambda)}(\infty)) = 0.
\end{displaymath} 
Therefore, for $\delta$ small enough, 
\begin{eqnarray*}
E\abs{\tilde X^{(\lambda)}(\infty)}^{2m} &\leq & C\E V_m(\tilde X^{(\lambda)}(\infty))\\
 &\leq & \frac{C}{(C_1 - \sqrt{\delta}C_3)}\Bigg{(}C_2 + \sqrt{\delta}C_4\sum \limits_{i=1}^d \frac{\E \abs{\delta Q^{(\lambda)}_i(\infty) - p_i(e^T \tilde X^{(\lambda)}(\infty))^+}^{2m}}{\delta^m}\Bigg{)}.
\end{eqnarray*}
By (\ref{eq:sscscaled}), it follows that 
\begin{displaymath}
\E \abs{\tilde  X^{(\lambda)}(\infty)}^{2m} \leq \frac{C}{C_1 - \sqrt{\delta}C_3} \Bigg{(}1 + \sqrt{\delta}\E [(e^T \tilde X^{(\lambda)}(\infty))^+]^{m}\Bigg{)}.
\end{displaymath}
Hence, we have a recursive relationship that guarantees 
\begin{displaymath}
\sup \limits_{\lambda > 0} \E \abs{\tilde  X^{(\lambda)}(\infty)}^{2m} < \infty
\end{displaymath}
whenever 
\begin{displaymath}
\sup \limits_{\lambda > 0} \E[(e^T\tilde  X^{(\lambda)}(\infty))^+]^m < \infty.
\end{displaymath}
To conclude, we need to verify that
\begin{displaymath}
\sup \limits_{\lambda > 0} \E[(e^T\tilde  X^{(\lambda)}(\infty))^+] < \infty,
\end{displaymath}
but this was proved in equation (5.2) of \cite{DaiDiekGao2014}.

\bibliography{dai02282015}
\end{document}